\newtheorem{thm}{Theorem}[section]
\newtheorem{lemma}[thm]{Lemma}
\newtheorem{prop}[thm]{Proposition}
\theoremstyle{definition}
\newtheorem{exa}[thm]{Example}
\theoremstyle{remark}
\numberwithin{equation}{section}
\newcommand{\vanish}[1]{\relax}       
\def\qedsymbol{\hbox to 1ex{\llap{\rule{0.25pt}{1ex}}\rlap{\rule{1ex}{0.25pt}}\lower0.25pt\rlap{\raise1ex\rlap{\rule{1ex}{0.25pt}}}\hskip1ex\llap{\rule{0.25pt}{1ex}}}}
\def\rlqed{\rlap{\rule{\hsize}{0pt}\kern-1ex\kern-1em\qed}} 
\newcounter{aufzi}
\newenvironment{aufzi}{\begin{list}{ {\upshape\alph{aufzi})}}{
        \usecounter{aufzi}
        \topsep1ex
        \parsep0cm
        \itemsep0.8ex
        \leftmargin1cm
        \labelwidth0.5cm
        \labelsep0.3cm
}}
{\end{list}}
\newcounter{aufzii}
\newenvironment{aufzii}{\begin{list}{\hfill {\upshape
(\roman{aufzii})}}{
        \usecounter{aufzii}
        \topsep1ex
        \parsep0cm
        \itemsep1ex
        \leftmargin1cm
        \labelwidth0.5cm
        \labelsep0.3cm
}}
{\end{list}}
\newcounter{aufziii}
\newenvironment{aufziii}{\begin{list}{ {\upshape\arabic{aufziii})}}{
        \usecounter{aufziii}
        \topsep1ex
        \parsep0cm
        \itemsep0.8ex
        \leftmargin1cm
        \labelwidth0.5cm
        \labelsep0.3cm
}}
{\end{list}}
\newcommand{\calF}{\mathcal{F}}
\newcommand{\calQ}{\mathcal{Q}}
\newcommand{\calR}{\mathcal{R}}
\newcommand{\calS}{\mathcal{S}}
\newcommand{\calZ}{\mathcal{Z}}
\def\bfX{\mathbf{X}}
\def\uK{\mathrm{K}}
\def\uL{\mathrm{L}}
\def\uN{\mathrm{N}}
\def\ub{\mathrm{b}}
\def\ue{\mathrm{e}}
\def\ui{\mathrm{i}}
\def\uk{\mathrm{k}}
\def\ul{\mathrm{l}}
\def\un{\mathrm{n}}
\def\up{\mathrm{p}}
\def\uv{\mathrm{v}}
\def\uw{\mathrm{w}}
\renewcommand{\ue}{\mathrm{e}}    
\renewcommand{\ui}{\mathrm{i}}    
\def\id{\mathop{\mathrm{id}}\nolimits}   
\newcommand{\N}{\mathbb{N}}
\newcommand{\Z}{\mathbb{Z}}
\newcommand{\comp}[1]{#1^\mathrm{c}}    
\DeclareMathOperator{\card}{card}  
\newcommand{\sdif}{\triangle}      
\newcommand{\Bigcap}[2][\relax]{%
 \ifx#1\relax \bigcap_{#2}
 \else \bigcap^{#1}_{#2}
 \fi}
\newcommand{\Bigcup}[2][\relax]{%
 \ifx#1\relax \bigcup_{#2}
 \else \bigcup^{#1}_{#2}
 \fi}
\def\fact#1#2{#1/#2}
\def\tfact#1#2{#1/#2}
\def\fact#1#2{{\raise0.2em\hbox{$#1$}\kern-0.2em/\kern-0.1em\lower0.2em\hbox{$#2$}}}
\def\tfact#1#2{{\raise0.1em\hbox{\small$#1$}\kern-0.1em/\kern-0.1em\lower0.1em\hbox{\small$#2$}}}
\newcommand{\norm}[2][\relax]{
   \ifx#1\relax \ensuremath{\left\Vert#2\right\Vert}
   \else \ensuremath{\left\Vert#2\right\Vert_{#1}}
   \fi}
\newcommand{\Bnorm}[2][\relax]{
   \ifx#1\relax \ensuremath{\Bigl\Vert#2\Bigr\Vert}
   \else \ensuremath{\Bigl\Vert#2\Bigr\Vert_{#1}}
   \fi}
\newcommand{\tdprod}[2]{\ensuremath{%
  \setbox0=\hbox{\ensuremath{\langle#1,#2 \rangle}}
  \dimen@\ht0
  \advance\dimen@ by \dp0 (#1\rule[-\dp0]{0pt}{\dimen@}\,|#2\hspace{1pt})}}
\newcommand{\dprod}[2]{\ensuremath{%
  \setbox0=\hbox{\ensuremath{\left\langle#1,#2\right\rangle}}
  \dimen@\ht0
  \advance\dimen@ by \dp0 \left\langle\left.#1\rule[-\dp0]{0pt}{\dimen@}\,\right|#2\hspace{1pt}\right\rangle}}
\newcommand{\bdprod}[2]{\ensuremath{%
  \setbox0=\hbox{\ensuremath{\bigl\langle#1,#2\bigr\rangle}}
  \dimen@\ht0
  \advance\dimen@ by \dp0 \bigl\langle#1\bigl|\rule[-\dp0]{0pt}{\dimen@}\bigr.#2\hspace{1pt}\bigr\rangle}}
\newcommand{\Bdprod}[2]{\ensuremath{%
  \setbox0=\hbox{\ensuremath{\Bigl\langle#1,#2\Bigr\rangle}}
  \dimen@\ht0
  \advance\dimen@ by \dp0 \Bigl\langle#1\Bigl|\rule[-\dp0]{0pt}{\dimen@}\Bigr.#2\hspace{1pt}\Bigr\rangle}}
\newcommand{\tsprod}[2]{\ensuremath{%
  \setbox0=\hbox{\ensuremath{(#1,#2)}}
  \dimen@\ht0
  \advance\dimen@ by \dp0 (#1\rule[-\dp0]{0pt}{\dimen@}\,|#2\hspace{1pt})}}
\newcommand{\sprod}[2]{\ensuremath{%
  \setbox0=\hbox{\ensuremath{\left(#1,#2\right)}}
  \dimen@\ht0
  \advance\dimen@ by \dp0 \left(\left.#1\rule[-\dp0]{0pt}{\dimen@}\,\right|#2\hspace{1pt}\right)}}
\newcommand{\bsprod}[2]{\ensuremath{%
  \setbox0=\hbox{\ensuremath{\bigl(#1,#2\bigr)}}
  \dimen@\ht0
  \advance\dimen@ by \dp0 \bigl(#1\bigl|\rule[-\dp0]{0pt}{\dimen@}\bigr.#2\hspace{1pt}\bigr)}}
\newcommand{\Bsprod}[2]{\ensuremath{%
  \setbox0=\hbox{\ensuremath{\Bigl(#1,#2\Bigr)}}
  \dimen@\ht0
  \advance\dimen@ by \dp0 \Bigl(#1\Bigl|\rule[-\dp0]{0pt}{\dimen@}\Bigr.#2\hspace{1pt}\Bigr)}}
\newcommand{\Ell}[2][\relax]{
   \ifx#1\relax \mathrm{L}^{\mathrm{#2}}
   \else \mathrm{L}^{\mathrm{#2}}_{\mathrm{#1}}
   \fi}
\renewcommand{\Ell}[2][\relax]{
   \ifx#1\relax \mathrm{L}^{\!#2}
   \else \mathrm{L}^{\!#2}_{\mathrm{#1}}
   \fi}
\newcommand{\Wee}[2][\relax]{
   \ifx#1\relax \mathrm{W}^{\mathrm{#2}}
   \else \mathrm{W}^{\mathrm{#2}}_{\mathrm{#1}}
   \fi}
\newcommand{\Har}[2][\relax]{
   \ifx#1\relax \mathsf{H}^{\mathsf{#2}}
   \else   \mathsf{H}^{\mathsf{#2}}_{\mathrm{#1}}
   \fi}
\def\prX{\mathrm X}    
\def\rlqed{\rlap{\rule{\hsize}{0pt}\kern-1ex\kern-1em\qed}}
\def\maketag@@@@@#1{\llap{\hbox to\hsize{\m@th\normalfont#1}}%
\gdef\tagform@##1{\maketag@@@{(\ignorespaces##1\unskip\@@italiccorr)}}}
\def\eqtext#1{\gdef\tagform@##1{\maketag@@@@@{\ignorespaces##1\unskip\@@italiccorr\hfill}}\tag{#1}}%
\def\reqtext#1{\gdef\tagform@##1{\maketag@@@@@{\hfill\ignorespaces##1\unskip\@@italiccorr}}\tag{#1}}%
\def\leqtext#1{\gdef\tagform@##1{\maketag@@@@@{\ignorespaces##1\unskip\@@italiccorr}}\tag{#1}}%
\newcommand{\inv}{\mathrm{inv}}
\newcommand{\UT}[2]{\mathsf{UT}_{#1}(#2)}
\newcommand{\cntm}[1]{ \left| #1 \right|}
\newcommand{\kintl}[2]{\mathrm{int}_{#1}(#2)}
\newcommand{\kbdrl}[2]{\partial_{#1}(#2)}
\newcommand{\indi}[1]{\mathbf{1}_{#1}}
\newcommand{\kcc}[2]{\uK_{#1}^{0}(#2)}
\newcommand{\kcca}[2]{\overline{\uK}_{#1}^0(#2)}
\newcommand{\kc}[2]{\uK(#1,#2)}
\newcommand{\kca}[2]{\overline{\uK}(#1,#2)}
\newcommand{\kcs}[1]{\widetilde{\uK}(#1)}
\newcommand{\aast}{A^{\ast}}
\newcommand{\aex}{A^!}
\newcommand{\del}{\mathrm{01}}
\newcommand{\cin}[1]{\mathrm{I}(#1)}
\newcommand{\mia}[1]{m_{#1}^{\ast}}
\newcommand{\kia}[1]{k_{#1}^{\ast}}
\newcommand{\gint}[1]{G_{\mia{#1}}^{\circ}}
\newcommand{\tint}[1]{T_{#1}^{\circ}}
\date{\today}
\begin{document}

\title[Brudno Theorem]{Computable F{\o}lner monotilings\\ and a theorem of Brudno I.}

\author[Nikita Moriakov]{Nikita Moriakov}
\address{Delft Institute of Applied Mathematics, Delft University of Technology,
P.O. Box 5031, 2600 GA Delft, The Netherlands}

\email{n.moriakov@tudelft.nl}

\subjclass{Primary  37B10, 37B40, 03D15}
\renewcommand{\subjclassname}{\textup{2000} Mathematics Subject
    Classification}

\thanks{The author kindly acknowledges the support from ESA CICAT of TU Delft}
\date{\today}

\begin{abstract}
The purpose of this article is to extend the earliest results of A.A. Brudno, connecting the topological entropy of a subshift $\bfX$ over $\N$ to the Kolmogorov complexity of words in $\bfX$, to subshifts over computable groups that posses computable F{\o}lner monotilings, which we introduce in this work as a computable version of the notion of a F{\o}lner monotiling originally due to B. Weiss. For every $d \in \N$, the groups $\Z^d$ and $\UT{d}{\Z}$ posses particularly nice computable F{\o}lner monotilings for which we can provide the required computing algorithms `explicitly'. Following the work of Weiss further, we show that the class of computable groups admitting computable F{\o}lner monotilings is closed under group extensions.
\end{abstract}

\maketitle

\section{Introduction}
It was proved by A.A. Brudno in \cite{brudno1974} that the topological entropy of a subshift $\bfX$ over $\N$ equals the limit of the average maximum Kolmogorov complexities of words on $[1,2,\dots,n] \subset \N$, i.e.
\begin{equation*}
h(\bfX) = \lim\limits_{n \to \infty} \frac 1 n \max\limits_{\omega \in \prX} \frac{\uK(\omega|_{[1,\dots,n]})}{n},
\end{equation*}
where $h(\bfX)$ is the topological entropy of $\bfX$ and $\uK(\omega|_{[1,\dots,n]})$ is the \emph{Kolmogorov complexity} of the word $\omega|_{[1,\dots,n]}$ of length $n$. Roughly speaking, $\uK(\omega|_{[1,\dots,n]})$ is the length of the shortest description of $\omega|_{[1,\dots,n]}$ for a fixed `optimal decompressor' that takes finite binary words as an input and produces finite words as an output. So, for instance, the Kolmogorov complexity of long periodic words will be small compared to their length, while `random' words would be most complex. The notion of a topological entropy has been since then extended to actions of discrete amenable groups, so one can wonder if the results in \cite{brudno1974} can be extended as well. In this article we give such an extension for a large class of amenable groups, including both classical examples, such as the groups $\Z^d$ for $d \in \N$, and interesting new ones, such as the nilpotent groups $\UT{d}{\Z}$ of upper-triangular matrices with integer entries, as well.

The paper is structured as follows. We provide a basic background on amenable groups and topological entropy for amenable group actions in Section \ref{ss.amengr}. Section \ref{ss.regfoln} is devoted to the notion of a F{\o}lner monotiling, which was suggested by B. Weiss in \cite{weiss2001}. The classical concepts of a computable function, a computable set and Kolmogorov complexity are explained in Section \ref{ss.compcompl}. In Section \ref{ss.compspaces} we introduce new notions: computable spaces, morphisms between computable spaces, word presheaves and Kolmogorov complexity of sections of word presheaves. Asymptotic Kolmogorov complexity of word presheaves is defined at the end of this section, too. Using this language we define computable groups in Section \ref{ss.compgrp}. `Computable version' of the notion of a F{\o}lner monotiling is developed in Section \ref{ss.compfoln}, at the end of which we prove that the class of computable groups admitting computable F{\o}lner monotilings is closed under `computable' group extensions.

The main theorem of this article is Theorem \ref{thm.brudno} in Section \ref{s.brudnocfm}, where we prove that the topological entropy of a subshift over a computable group admitting computable normal F{\o}lner monotiling equals the asymptotic Kolmogorov complexity of the associated word presheaf. The requirement of normality is nonrestrictive: we show in Lemma \ref{l.normalization} that each computable F{\o}lner monotiling gives rise to a computable normal F{\o}lner monotiling.

The groups $\Z^d$ and  $\UT{d}{\Z}$ do admit computable F{\o}lner monotilings, thus Theorem \ref{thm.brudno} gives a nontrivial abstract generalization of the results of A.A.Brudno in \cite{brudno1974} and, partially, the results of S. G. Simpson in \cite{simpson2015} for $\Z$ and $\Z^d$ cases respectively.

I would like to thank my advisor Markus Haase for reading the draft and providing corrections. I would also like to thank Stephen G. Simpson for explaining certain details in \cite{simpson2015}.

\section{Preliminaries}

\subsection{Amenable groups and entropy theory}
\label{ss.amengr}
In this section we will remind the reader of the classical notion of amenability, and state some results from entropy theory of amenable group actions. We stress that all the groups that we consider in this paper are discrete and countably infinite.

Let $\Gamma$ be a group with the counting measure $\cntm{\cdot}$. A sequence of finite sets $(F_n)_{n \geq 1}$ is called
\begin{aufziii}
\item a \textbf{weak F{\o}lner sequence} if for every finite set $K \subseteq \Gamma$ one has
\begin{equation*}
    \frac{\cntm{F_n \sdif K F_n}}{\cntm{F_n}} \to 0 ;
\end{equation*}
\item a \textbf{strong F{\o}lner sequence} if for every finite set $K \subseteq \Gamma$ one has
\begin{equation*}
    \frac{\cntm{\kbdrl{K}{F_n}}}{\cntm{F_n}} \to 0 ,
\end{equation*}
where
\begin{equation*}
\kbdrl{K}{F}:=K^{-1}F \cap K^{-1} \comp{F}
\end{equation*}
 is the \textbf{$K$-boundary} of $F$;

\end{aufziii}

One can show that a sequence of sets $(F_n)_{n \geq 1}$ is a weak F{\o}lner sequence if and only if it is a strong  F{\o}lner sequence (see \cite{ceccherini2010}, Section 5.4), hence we will simply call it a \textbf{F{\o}lner sequence}. A group $\Gamma$ is called \textbf{amenable} if it admits a F{\o}lner sequence. Since $\Gamma$ is infinite, for every F{\o}lner sequence $(F_n)_{n \geq 1}$ we have $\cntm{F_n} \to \infty$ as $n \to \infty$. For finite sets $F, K \subseteq \Gamma$ the set
\begin{equation*}
\kintl{K}{F}:=F \setminus \kbdrl{K}{F}
\end{equation*}
is called the \textbf{$K$-interior} of $F$. It is clear that if a sequence of finite sets $(F_n)_{n \geq 1}$ is a  F{\o}lner sequence, then for every finite $K \subseteq \Gamma$ one has \begin{equation*}
\cntm{\kintl{K}{F_n}}/\cntm{F_n} \to 1 \text{ as } n \to \infty.
\end{equation*}

We will now briefly remind the reader of the notion of topological entropy for amenable group actions. Let $\alpha=\{ A_1,\dots,A_n\}$ be a finite open cover of a topological space $\prX$. The \textbf{topological entropy of a cover} $\alpha$ is defined by
\begin{equation*}
H(\alpha):=\log \min\{ \card \beta : \beta \subseteq \alpha \text{ a subcover}\}.
\end{equation*}
The entropy of a cover is always a nonnegative real number. We say that a finite open cover $\alpha$ is \textbf{finer} than a finite open cover $\beta$ if for every $B \in \beta$ there exists $A \in \alpha$ such that $A \subseteq B$. If $\alpha,\beta$ are two finite open covers, then
\begin{equation*}
\alpha \vee \beta:=\{ A \cap B: A \in \alpha, B \in \beta \}
\end{equation*}
is a finite open cover as well. It is clear that $\alpha \vee \beta$ is finer than $\alpha$ and $\beta$. Given a topological dynamical system $\bfX=(\prX,\Gamma)$, where the discrete amenable group $\Gamma$ acts on the topological space $\prX$ on the left by homeomorphisms, we can also define (dynamical) entropy of a cover. For every element $g \in \Gamma$ and every finite open cover $\alpha$ we define a finite open cover $g^{-1} \alpha$ by
\begin{equation*}
g^{-1} \alpha := \{ g^{-1} A: A \in \alpha\}.
\end{equation*}
Next, for every finite subset $F \subseteq \Gamma$ and every finite open cover $\alpha$ we define a finite open cover
\begin{equation*}
\alpha^F:=\bigvee\limits_{g \in F} g^{-1} \alpha.
\end{equation*}
Let $(F_n)_{n \geq 1}$ be a F{\o}lner sequence in $\Gamma$ and $\alpha$ be a finite open cover. The limit
\begin{equation*}
h(\alpha,\Gamma):=\lim\limits_{n \to \infty} \frac{H(\alpha^{F_n})}{\cntm{F_n}}
\end{equation*}
exists and it is a nonnegative real number independent of the choice of a F{\o}lner sequence due to the lemma of D.S. Ornstein and B.Weiss (see \cite{gromov1999},\cite{krieger2007}). The limit $h(\alpha,\Gamma)$ is called the \textbf{dynamical entropy of $\alpha$}. Finally, the \textbf{topological entropy} of a topological system $\bfX=(\prX,\Gamma)$ is defined by
\begin{equation*}
h(\bfX):=\sup\{ h(\alpha,\Gamma): \alpha \text{ a finite open cover of } \prX\}.
\end{equation*}

\subsection{F{\o}lner monotilings}
\label{ss.regfoln}
The purpose of this section is to discuss notion of a F{\o}lner monotiling, that was introduced by B.Weiss in \cite{weiss2001}. The (adapted) notion of a \emph{normal} F{\o}lner monotiling, central to the results of this paper, will also be suggested below.

A \textbf{monotiling} $[F, \calZ]$ in a discrete group $\Gamma$ is a pair of a finite set $F\subseteq \Gamma$, which we call a \textbf{tile}, and a set $\calZ \subseteq \Gamma$, which we call a set of \textbf{centers}, such that $\{ F z: z \in \calZ \}$ is a covering of $\Gamma$ by disjoint translates of $F$. A \textbf{F{\o}lner monotiling} is a sequence of monotilings $([F_n, \calZ_n])_{n \geq 1}$ s.t. $(F_n)_{n \geq 1}$ is a F{\o}lner sequence in $\Gamma$. We call a F{\o}lner monotiling $([F_n,\calZ_n])_{n \geq 1}$ \textbf{normal} if
\begin{aufzi}
\item $\frac{\cntm{F_n}}{\log n} \to \infty$ as $n \to \infty$;
\item $\ue \in F_n$ for every n.
\end{aufzi}

\begin{exa}
Consider the group $\Z^d$ for some $d \geq 1$ and the F{\o}lner sequence $(F_n)_{n\geq 1}$ in $\Z^d$ given by $$F_n:= [0,1,2,\dots,n-1]^d.$$ Furthermore, for every $n$ let $$\calZ_n:= n \Z^d.$$ It is easy to see that $([F_n,\calZ_n])_{n \geq 1}$ is a normal F{\o}lner monotiling.
\end{exa}

Later we will see that the requirement of normality is not essentially restrictive for our purposes. We will need the following simple
\begin{prop}
\label{prop.fmonot}
Let $([F_n, \calZ_n])_{n\geq 1}$ be a F{\o}lner monotiling of $\Gamma$ s.t. $\ue \in F_n$ for every $n$. Then for every fixed $k$
\begin{equation}
\frac{\cntm{\kintl{F_k}{F_n} \cap \calZ_k}}{\cntm{F_n}} \to \frac 1 {\cntm{F_k}}
\end{equation}
and
\begin{equation}
\frac{\cntm{F_n \cap \calZ_k}}{\cntm{F_n}} \to \frac 1 {\cntm{F_k}}
\end{equation}
as $n \to \infty$.
\end{prop}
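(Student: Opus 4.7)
The plan is to control both ratios by sandwiching them between an `inner' and an `outer' count of tiles of $[F_k,\calZ_k]$. Abbreviate $A_n := \kintl{F_k}{F_n} \cap \calZ_k$ and $B_n := F_n \cap \calZ_k$, and introduce the auxiliary set
\begin{equation*}
C_n := \{ z \in \calZ_k : F_k z \cap F_n \neq \leeg \}
\end{equation*}
of centres whose tile meets $F_n$. Since $\ue \in F_k$, every $z \in \calZ_k$ lies in its own tile $F_k z$, which immediately yields the chain $A_n \subseteq B_n \subseteq C_n$.

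Next, I would sandwich $\cntm{F_n}$ via the tiling. The monotiling partitions $\Gamma$ into pieces of size $\cntm{F_k}$, and the assumption $\ue \in F_k$ turns the condition $z \in \kintl{F_k}{F_n}$ into $F_k z \subseteq F_n$. Hence the tiles indexed by $A_n$ are pairwise disjoint subsets of $F_n$, while the tiles indexed by $C_n$ cover $F_n$, so
\begin{equation*}
\cntm{F_k} \cdot \cntm{A_n} \leq \cntm{F_n} \leq \cntm{F_k} \cdot \cntm{C_n},
\end{equation*}
and consequently $\cntm{A_n}/\cntm{F_n} \leq 1/\cntm{F_k} \leq \cntm{C_n}/\cntm{F_n}$.

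The main step is to show that the deficit $C_n \setminus A_n$ is negligible. If $z \in C_n \setminus A_n$, then $F_k z$ meets $F_n$ but is not contained in $F_n$, so one finds $f_1, f_2 \in F_k$ with $f_1 z \in F_n$ and $f_2 z \in \comp{F_n}$, placing $z$ in $F_k^{-1} F_n \cap F_k^{-1} \comp{F_n} = \kbdrl{F_k}{F_n}$. The F\o lner property gives $\cntm{\kbdrl{F_k}{F_n}}/\cntm{F_n} \to 0$, hence $\cntm{C_n \setminus A_n}/\cntm{F_n} \to 0$, and the earlier sandwich forces both $\cntm{A_n}/\cntm{F_n}$ and $\cntm{C_n}/\cntm{F_n}$ to $1/\cntm{F_k}$; the inclusions $A_n \subseteq B_n \subseteq C_n$ then give the same limit for $B_n$. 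The only delicate point is identifying $C_n \setminus A_n$ with a subset of the $F_k$-boundary, which rests squarely on the hypothesis $\ue \in F_k$; everything else is routine counting based on the monotiling property.
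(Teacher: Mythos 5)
Your argument is correct, and it is essentially the same tile-counting argument as the paper's: both proofs compare the number of $\calZ_k$-centres near $F_n$ to $\cntm{F_n}/\cntm{F_k}$ using disjointness of the tiles, the characterization $z \in \kintl{F_k}{F_n} \Leftrightarrow F_k z \subseteq F_n$ (valid since $\ue \in F_k$), and the fact that the discrepancy lives in the $F_k$-boundary $\kbdrl{F_k}{F_n}$, which is negligible by the F{\o}lner property. Your two-sided sandwich between the inner count $A_n$ and the outer count $C_n$ is a slightly cleaner bookkeeping than the paper's decomposition of the centres meeting $\kintl{F_k}{F_n}$, and it handles both stated limits at once, but the underlying idea is identical.
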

\begin{proof}
Observe first that for every set $A \subseteq \Gamma$  we have
\begin{equation*}
g \in \kintl{F_k}{A} \Leftrightarrow F_k g \subseteq A.
\end{equation*}
For every $n \in \N$, consider the finite set $A_{n,k}:=\{ z \in \calZ_k: F_k z \cap \kintl{F_k}{F_n} \neq \varnothing \}$. Then the translates $\{ F_k z: z \in A_{n,k} \}$ form a disjoint cover of the set $\kintl{F_k}{F_n}$. It is easy to see that
\begin{equation*}
\Gamma = \kintl{F_k}{F_n} \sqcup \kbdrl{F_k}{F_n} \sqcup \kintl{F_k}{\comp{F_n}}. 
\end{equation*}
Since $A_{n,k} \cap \kintl{F_k}{\comp{F_n}} = \varnothing$, we can decompose the set of centers $A_{n,k}$ as follows
\begin{equation*}
A_{n,k} = (A_{n,k} \cap \kintl{F_k}{F_n}) \sqcup (A_{n,k} \cap \kbdrl{F_k}{F_n}).
\end{equation*}
Since $(F_n)_{n \geq 1}$ is a F{\o}lner sequence,
\begin{equation*}
\frac{\cntm{F_k (A_{n,k} \cap \kbdrl{F_k}{F_n})}}{\cntm{F_n}}=\frac{\cntm{F_k} \cdot \cntm{A_{n,k} \cap \kbdrl{F_k}{F_n}}}{\cntm{F_n}} \to 0
\end{equation*}
and $\cntm{\kintl{F_k}{F_n}}/\cntm{F_n} \to 1$ as $n \to \infty$. Then from the inequalities
\begin{align*}
\frac{\cntm{\kintl{F_k}{F_n}}}{\cntm{F_n}} &\leq \frac{\cntm{F_k (A_{n,k} \cap \kbdrl{F_k}{F_n})}}{\cntm{F_n}} + \frac{\cntm{ F_k (A_{n,k} \cap \kintl{F_k}{F_n}) }}{\cntm{F_n}} \\
&\leq \frac{\cntm{F_k (A_{n,k} \cap \kbdrl{F_k}{F_n}) }}{\cntm{F_n}} + 1
\end{align*}
we deduce that
\begin{equation}
\frac{\cntm{F_k} \cdot \cntm{A_{n,k} \cap \kintl{F_k}{F_n}}}{\cntm{F_n}} \to 1
\end{equation}
as $n \to \infty$. It remains to note that $A_{n,k} \cap \kintl{F_k}{F_n} = \calZ_k \cap \kintl{F_k}{F_n}$ and the first statement follows. The second statement follows from the first one and the fact that $(F_n)_{n \geq 1}$ is a strong F{\o}lner sequence.
\end{proof}

Later, in the Section \ref{ss.compfoln}, we will add a \emph{computability} requirement to the notion of a normal F{\o}lner monotiling. The central result of this paper says that the Brudno's theorem holds for groups admitting a computable normal F{\o}lner monotiling.

\subsection{Computability and Kolmogorov complexity}
\label{ss.compcompl}
In this section we will discuss the standard notions of computability and Kolmogorov complexity that we will use in this work. We refer to Chapter 7 in \cite{hedman2004} for details, more definitions and proofs.

For a natural number $k$ a $k$-ary \textbf{partial function} is any function of the form $f: D \to \N \cup \{ 0 \}$, where $D$, \textbf{domain of definition}, is a subset of $(\N \cup \{ 0 \})^k$ for some natural $k$. A $k$-ary partial function is called \textbf{computable} if there exists an algorithm which takes a $k$-tuple of nonnegative integers $(a_1,a_2,\dots,a_k)$, prints $f((a_1,a_2,\dots,a_k))$ and terminates if $(a_1,a_2,\dots,a_k)$ is in the domain of $f$, while yielding no output otherwise. A function is called \textbf{total}, if it is defined everywhere.

The term \emph{algorithm} above stands, informally speaking, for a computer program. One way to formalize it is through introducing the class of \emph{recursive functions}, and the resulting notion coincides with the class of functions computable on \emph{Turing machines}. We do not focus on these question in this work, and we will think about computability in an `informal' way.

A set $A \subseteq \N$ is called \textbf{recursive} (or \textbf{computable}) if the indicator function $\indi{A}$ of $A$ is computable. It is easy to see that finite and co-finite subsets of $\N$ are computable. Furthermore, for computable sets $A,B \subseteq \N$ their union and intersection are also computable. If a total function $f: \N \to \N$ is computable and $A \subseteq \N$ is a computable set, then $f^{-1}(A)$, the full preimage of $A$, is computable. The image of a computable set via a total computable bijection is computable, and the inverse of such a bijection is a computable function.

A sequence of subsets $(F_n)_{n \geq 1}$ of $\N$ is called \textbf{computable} if the total function $\indi{F_{\cdot}}: (n,x) \mapsto \indi{F_n}(x)$ is computable. It is easy to see that a total function $f: \N \to \N$ is computable if and only if the sequence of singletons $(f(n))_{n \geq 1}$ is computable in the sense above.

It is very often important to have a numeration of elements of a set by natural numbers. A set $A \subseteq \N$ is called \textbf{enumerable} if there exist a total computable surjective function $f: \N \to A$. If the set $A$ is infinite, we can also require $f$ to be injective. This leads to an equivalent definition because an algorithm computing the function $f$  can be modified so that no repetitions occur in its output. Finite and cofinite sets are enumerable. It can be shown (Proposition 7.44 in \cite{hedman2004}) that a set $A$ is computable if and only if both $A$ and $\N \setminus A$ are enumerable. Furthermore, for a set $A \subsetneq \N$ the following are equivalent:
\begin{aufzii}
\item $A$ is enumerable;
\item $A$ is the domain of definition of a partial recursive function.
\end{aufzii}

Finally, we can introduce the Kolmogorov complexity for finite words. Let $A$ be a computable partial function defined on a domain $D$ of finite binary words with values in the set of all finite words over finite alphabet $\Lambda$. Of course, we have defined computable functions on subsets of $(\N \cup \{ 0 \})^k$ with values in $\N \cup \{ 0 \}$ above, but this can be easily extended to (co)domains of finite words over finite alphabets. We can think of $A$ as a `decompressor' that takes compressed binary descriptions (or `programs') in its domain, and decompresses them to finite words over alphabet $\Lambda$. Then we define the \textbf{Kolmogorov complexity} of a finite word $\omega$ with respect to $A$ as follows:
\begin{equation*}
\kcc{A}{\omega}:=\inf\{ l(p): A(p)=w \},
\end{equation*}
where $l(p)$ denotes the length of the description. If some word $\omega_0$ does not admit a compressed version, then we let $\kcc{A}{\omega_0} = \infty$. The \textbf{average Kolmogorov complexity} with respect to $A$ is defined by
\begin{equation*}
\kcca{A}{\omega}:=\frac{\kcc{A}{\omega}}{l(\omega)},
\end{equation*}
where $l(\omega)$ is the length of the word $\omega$. Intuitively speaking, this quantity tells how effective the compressor $A$ is when describing the word $\omega$.

Of course, some decompressors are intuitively better than some others. This is formalized by saying that $A_1$ is \textbf{not worse} than $A_2$ if there is a constant $c$ s.t. for all words $\omega$
\begin{equation}
\label{eq.optdecomp}
\kcc{A_1}{\omega} \leq \kcc{A_2}{\omega}+c.
\end{equation}
A theorem of Kolmogorov says that there exist a decompressor $A_1$ that is optimal, i.e. for every decompressor $A_2$ there is a constant $c$ s.t. for all words $\omega$ the Equation \ref{eq.optdecomp} holds.

The notion of Kolmogorov complexity can be extended to words defined on finite subsets of $\N$, and this will be essential in the following sections. More precisely, let $X \subset \N$ be a finite subset, $\imath_X: X \to \{ 1,2,\dots, \card X\} $ an increasing bijection, $\Lambda$ a finite alphabet, $A$ a decompressor and $\omega \in \Lambda^Y$ a word defined on some set $Y \supseteq X$. Then we let
\begin{equation}
\label{eq.kcnsubs}
\uK_A(\omega,X):=\kcc{A}{\omega \circ \imath_X^{-1}}.
\end{equation}
and
\begin{equation}
\label{eq.kcansubs}
\overline{\uK}_A(\omega,X):=\frac{\kcc{A}{\omega \circ \imath_X^{-1}}}{\card X}.
\end{equation}
We call $\uK_A(\omega,X)$ the \textbf{Kolmogorov complexity} of $\omega$ over $X$ with respect to $A$, and $\overline{\uK}_A(\omega,X)$ is called the \textbf{mean Kolmogorov complexity} of $\omega$ over $X$ with respect to $A$. If a decompressor $A_1$ is not worse than a decompressor $A_2$ with some constant $c$, then for all $X, \omega$ above
\begin{equation*}
\uK_{A_1}(\omega,X)\leq\uK_{A_2}(\omega,X)+c.
\end{equation*}

From now on, we will (mostly) use a fixed optimal decompressor $\aast$ and write $\uK(\omega,X)$, $\overline{\uK}(\omega,X)$ omitting explicit reference to $\aast$.

When estimating the Kolmogorov complexity of words we will often have to encode nonnegative integers using binary words. We will now fix some notation that will be used later. When $n$ is a nonnegative integer, we write
$\underline \un$ for the \textbf{binary encoding} of $n$ and $\overline \un$ for the \textbf{doubling encoding} of $n$, i.e. if $b_l b_{l-1} \dots b_0$ is the binary expansion of $n$, then $\underline \un$ is the binary word $\ub_l \ub_{l-1} \dots \ub_0$ of length $l+1$ and $\overline \un$ is the binary word $\ub_l \ub_l \ub_{l-1} \ub_{l-1} \dots \ub_0 \ub_0$ of length $2l+2$. We denote the length of the binary word $\uw$ by $l(\uw)$, and is clear that $l(\underline \un) \leq \lfloor \log n\rfloor+1$ and $l(\overline \un) \leq 2 \lfloor \log n\rfloor+2$. We write $\widehat \un$ for the encoding $\overline{l(\underline \un)} \del \underline \un$ of $n$, i.e. the encoding begins with the length of the binary word $\underline \un$ encoded using doubling encoding, then the delimiter $\del$ follows, then the word $\underline n$. It is clear that $l(\widehat \un) \leq  2 \lfloor \log(\lfloor \log n \rfloor + 1) \rfloor + \lfloor \log n \rfloor + 5$. This encoding enjoys the following property: given a binary string
\begin{equation*}
\widehat x_1 \widehat x_2 \dots \widehat x_l,
\end{equation*}
the integers $x_1,\dots,x_l$ are unambiguously restored. We will call such an encoding a \textbf{simple prefix-free encoding}.

\subsection{Computable Spaces and Sheaves}
\label{ss.compspaces}
The goal of this section is to introduce the notions of \emph{computable space}, \emph{computable function} between computable spaces and \emph{word sheaves} over computable spaces. The complexity of sections of word presheaves and asymptotic complexity of word presheaves is introduced in this section as well.

An \textbf{indexing} of a set $X$ is an injective mapping $\imath: X \to \N$ such that $\imath(X)$ is a computable subset. Given an element $x \in X$, we call $\imath(x)$ the \textbf{index} of $x$. If $i \in \imath(X)$, we denote by $x_i$ the element of $X$ having index $i$. A \textbf{computable space} is a pair $(X, \imath)$ of a set $X$ and an indexing $\imath$. Preimages of computable subsets of $\N$ under $\imath$ are called \textbf{computable subsets} of $(X,\imath)$. Each computable subset $Y \subseteq X$ can be seen as a computable space $(Y, \imath|_Y)$, where $\imath|_Y$ is the restriction of the indexing function. Of course, the set $\N$ with identity as an indexing function is a computable space, and the computable subsets of $(\N, \id)$ are precisely the computable sets of $\N$ in the sense of Section \ref{ss.compcompl}.

Let $(X_1, \imath_1), (X_2, \imath_2), \dots, (X_k, \imath_k),(Y,\imath)$ be computable spaces. A (total) function $f: X_1\times X_2 \times \dots \times X_k \to Y$ is called \textbf{computable} if the function $\widetilde f: \imath_1(X_1) \times \imath_2(X_2) \times \dots \times \imath_k(X_k) \to \imath(Y)$ determined by the condition
\begin{equation*}
\widetilde f(\imath_1(x_1),\imath_2(x_2),\dots,\imath_k(x_k)) = \imath(f(x_1,x_2,\dots,x_k))
\end{equation*}
for all $(x_1,x_2,\dots,x_k) \in X_1 \times X_2 \times \dots \times X_k$ is computable. This definition extends the standard definition of computability from Section \ref{ss.compcompl} when the computable spaces under consideration are $(\N,\id)$. A computable function $f: (X,\imath_1) \to (Y,\imath_2)$ is called a \textbf{morphism} between computable spaces. This yields the definition of the \textbf{category of computable spaces}. Let $(X, \imath_1)$, $(X, \imath_2)$ be computable spaces. The indexing functions $\imath_1$ and $\imath_2$ of $X$ are called \textbf{equivalent} if $\id: (X,\imath_1) \to (X,\imath_2)$ is an isomorphism. It is clear that the classes of computable functions and computable sets do not change if we pass to equivalent indexing functions.

Given a computable space $(X,\imath)$, we call a sequence of subsets  $(F_n)_{n \geq 1}$ of $X$ \textbf{computable} if the function $\indi{F_{\cdot}}: \N \times X \to \{ 0,1\}, (n,x) \mapsto \indi{F_n}(x)$ is computable. We will also need a special notion of computability for sequences of \emph{finite} subsets of $(X,\imath)$. A sequence of finite subsets $(F_n)_{n \geq 1}$ of $X$ is called \textbf{canonically computable} if there is an algorithm that, given $n$, prints the set $\imath(F_n)$ and halts. One way to make this more precise is by introducing the canonical index of a finite set. Given a finite set $A=\{ x_1,x_2,\dots,x_k\} \subset \N$, we call the number $\cin{A}:=\sum\limits_{i=1}^k 2^{x_i}$ the \textbf{canonical index} of $A$. Hence a sequence of finite subsets $(F_n)_{n \geq 1}$ of $X$ is canonically computable if and only if the total function $n \mapsto \cin{\imath(F_n)}$ is computable. Of course, a canonically computable sequence of finite sets is computable, but the converse is not true due to the fact that there is no effective way of determining how large a finite set with a given computable indicator function is. It is easy to see that the class of canonically computable sequences of finite sets does not change if we pass to an equivalent indexing. The proof of the following proposition is straightforward:

\begin{prop}
Let $(X,\imath)$ be a computable space. Then
\begin{aufzi}
\item If $(F_n)_{n \geq 1}, (G_n)_{n \geq 1}$ are (canonically) computable sequences of sets, then the sequences of sets $(F_n \cup G_n)_{n \geq 1}$, $(F_n \cap G_n)_{n \geq 1}$ and $(F_n \setminus G_n)_{n \geq 1}$ are (canonically) computable.
\item If $(F_n)_{n \geq 1}$ is a canonically computable sequence of sets and $(G_n)_{n \geq 1}$ is a computable sequence of sets, then the sequence of sets $(F_n \cap G_n)_{n \geq 1}$ is canonically computable.
\end{aufzi}

\end{prop}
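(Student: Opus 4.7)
The plan is to unpack both parts directly from the definitions, since in each case we need only exhibit an algorithm whose existence is routine given the algorithms assumed to exist for $F_n$ and $G_n$. Throughout, we may identify everything with subsets of $\N$ by transporting along $\imath$, so that the question is entirely about sequences of subsets of $\N$.

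For part (a) in the plainly \emph{computable} case, I would simply observe that the indicator function of the Boolean combinations is a Boolean combination of the indicator functions. More precisely, $\indi{F_n \cup G_n}(x) = \max(\indi{F_n}(x),\indi{G_n}(x))$, $\indi{F_n \cap G_n}(x) = \min(\indi{F_n}(x),\indi{G_n}(x))$, and $\indi{F_n \setminus G_n}(x) = \indi{F_n}(x) \cdot (1 - \indi{G_n}(x))$. Since composition and arithmetic on $\{0,1\}$-valued computable functions are computable, all three of these are computable in the pair $(n,x)$, which is exactly the required property.

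For part (a) in the \emph{canonically computable} case, I would use the algorithms printing $\imath(F_n)$ and $\imath(G_n)$ as finite explicit subsets of $\N$. Given these two finite lists of natural numbers, an obvious subroutine produces the lists of their union, intersection, and set difference, and hence the corresponding canonical indices; that is, $n \mapsto \cin{\imath(F_n) \cup \imath(G_n)}$ and so on are computable functions of $n$. Since $\imath$ is injective, $\imath(F_n \cup G_n) = \imath(F_n) \cup \imath(G_n)$, and similarly for intersection and set difference, so the displayed functions are exactly the ones required for canonical computability of the combined sequences.

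Part (b) is the step where the distinction between canonical computability and mere computability matters, and is the only place where one has to be careful. The algorithm for canonical computability of $(F_n \cap G_n)_{n \geq 1}$ proceeds as follows: given $n$, first run the canonical algorithm for $F_n$ to obtain the finite set $\imath(F_n) \subset \N$ explicitly; then, for each of its finitely many elements $i \in \imath(F_n)$, evaluate the computable function $\indi{G_{\cdot}}(n,\imath^{-1}(i))$ (this terminates because $\indi{G_{\cdot}}$ is total computable) and retain $i$ precisely when the value is $1$. This yields $\imath(F_n \cap G_n)$ as a finite list, and so its canonical index as well. The only potential obstacle is ensuring that the loop is finite and effective, which is exactly what canonical computability of $(F_n)$ guarantees; without it, one would need to search all of $\N$ for elements of $F_n$, and there is in general no bound. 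This asymmetry is precisely what the remark preceding the proposition points out, and explains why the hypothesis on $(F_n)$ cannot be weakened to ordinary computability.
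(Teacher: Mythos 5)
Your proof is correct, and it is precisely the routine argument the paper has in mind when it declares the proposition ``straightforward'' and omits the proof. Both halves of part (a) and the asymmetric hypothesis in part (b) are handled exactly as intended, including the key observation that canonical computability of $(F_n)_{n\geq 1}$ is what makes the membership-testing loop in (b) finite and effective.
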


Let $(X,\imath)$ be a computable space and $\Lambda$ be a finite alphabet. A \textbf{word presheaf} $\calF_{\Lambda}$ on $X$ consists of
\begin{aufziii}
\item A set $\calF_{\Lambda}(U)$ of $\Lambda$-valued functions defined on the set $U$ for every computable subset $U \subseteq X$;
\item A restriction mapping $\rho_{U,V}: \calF_{\Lambda}(U) \to \calF_{\Lambda}(V)$ for each pair $U,V$ of computable subsets s.t. $V \subseteq U$, that takes functions in $\calF_{\Lambda}(U)$ and restricts them to the subset $V$.
\end{aufziii}
It is easy to see that the standard `presheaf axioms' are satisfied: $\rho_{U,U}$ is identity on $\calF_{\Lambda}(U)$ for every computable $U \subseteq X$, and for every triple $V \subseteq U \subseteq W$ we have that $\rho_{W,V} = \rho_{U,V} \circ \rho_{W,U}$. Elements of $\calF_{\Lambda}(U)$ are called \textbf{sections} over $U$, or \textbf{words} over $U$. We will often write $s|_V$ for $\rho_{U,V}s$, where $s \in \calF_{\Lambda}(U)$ is a section.

We have introduced Kolmogorov complexity of words supported on subsets of $\N$ in the previous section, now we want to extend this by introducing complexity of sections. Let $(X,\imath)$ be a computable space and let $\calF_{\Lambda}$ be a word presheaf over $(X,\imath)$. Let $U \subseteq X$ be a finite set and $\omega \in \calF_{\Lambda}(U)$. Then we define the \textbf{Kolmogorov complexity} of $\omega \in \calF_{\Lambda}(U)$ by
\begin{equation}
\kc{\omega}{U}:=\kc{\omega \circ \imath^{-1}}{\imath(U)}
\end{equation}
and the \textbf{mean Kolmogorov complexity} of $\omega \in \calF_{\Lambda}(U)$ by
\begin{equation}
\kca{\omega}{U}:=\kca{\omega \circ \imath^{-1}}{\imath(U)}.
\end{equation}
The quantities on the right hand side here are defined in the Equations \ref{eq.kcnsubs} and \ref{eq.kcansubs} respectively (which are special cases of the more general definition when the computable space $X$ is $(\N,\id)$).

Let $(F_n)_{n \geq 1}$ be a sequence of finite subsets of $X$ s.t. $\card F_n \to \infty$ as $n \to \infty$. Then we define \textbf{asymptotic Kolmogorov complexity} of the word presheaf $\calF_{\Lambda}$ along the sequence $(F_n)_{n \geq 1}$ by
\begin{equation*}
\kcs{\calF_{\Lambda}}:= \limsup\limits_{n \to \infty} \max\limits_{\omega \in \calF_{\Lambda}(F_n)} \kca{\omega}{F_n}.
\end{equation*}
Dependence on the sequence is omitted in the notation, but it will always be clear from the context which sequence we take. If $A'$ is some decompressor, then it follows from the optimality of $\aast$ that
\begin{equation*}
\kcs{\calF_{\Lambda}} \leq \widetilde{\uK}_{A'}(\calF_{\Lambda}).
\end{equation*}

To simplify the notation, we adopt the following convention. We say explicitly what indexing function we use when introducing a computable space, but later, when the indexing is fixed, we shall often omit the indexing function from the notation and think about computable spaces as computable subsets of $\N$, endowed with induced ordering. Words defined on subsets of a computable space become words defined on subsets of $\N$. This will help to simplify the notation without introducing much ambiguity.

\subsection{Computable Groups}
\label{ss.compgrp}
In this section we provide the definitions of a computable group and a few related notions, connecting results from algebra with computability. This section is based on \cite{rabin1960}.

Let $\Gamma$ be a group with respect to the multiplication operation $\ast$. An indexing $\imath$ of $\Gamma$ is called \textbf{admissible} if the function $\ast: (\Gamma,\imath) \times (\Gamma,\imath) \to (\Gamma,\imath)$ is a computable function in the sense of Section \ref{ss.compspaces}. A \textbf{computable group} is a pair $(\Gamma,\imath)$ of a group $\Gamma$ and an admissible indexing $\imath$. Of course, the groups $\Z^d$ and $\UT{d}{\Z}$ possess `natural' admissible indexings. For the group $\Z$ we fix the indexing
\begin{equation*}
\imath: n \mapsto 2|n| + \indi{n \geq 0}.
\end{equation*}
It is clear that the groups $\Z^d$ for $d>1$ possess admissible indexing functions such that all coordinate projections onto $\Z$, endowed with the indexing function $\imath$ above, are computable. We leave the details to the reader.

The following lemma from \cite{rabin1960} shows that in a computable group taking the inverse is a computable operation.
\begin{lemma}
Let $(\Gamma, \imath)$ be a computable group. Then the function $\inv: (\Gamma,\imath) \to (\Gamma,\imath), g \mapsto g^{-1}$ is computable.
\end{lemma}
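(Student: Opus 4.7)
The plan is to give an explicit semi-decision procedure that, for fixed $g \in \Gamma$, searches through $\imath(\Gamma)$ for the index of $g^{-1}$ and halts as soon as it finds a witness. Two basic ingredients are available from the setup: membership in $\imath(\Gamma) \subseteq \N$ is decidable (because $\imath(\Gamma)$ is a computable subset by the definition of an indexing), and the multiplication $\ast$ is computable as a function of indices. What remains is a decidable test that, given the indices of $g$ and of a candidate $h$, recognises whether $h = g^{-1}$.

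The key observation is that the inverse can be characterised purely in terms of $\ast$, without first locating the identity element: for $g, h \in \Gamma$ one has $h = g^{-1}$ if and only if $g \ast h \ast g = g$. Indeed, if $h = g^{-1}$ then $g \ast g^{-1} \ast g = \ue \ast g = g$, and conversely $g \ast h \ast g = g$ implies $h \ast g = \ue$ by left cancellation, whence $h = g^{-1}$. Since $\ast$ is a morphism of computable spaces, the assignment $(\imath(g), \imath(h)) \mapsto \imath(g \ast h \ast g)$ is computable by composition, and comparison with $\imath(g)$ is just equality of natural numbers, which is decidable.

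The algorithm for $\inv$ is then the following. On input $i = \imath(g)$, enumerate $j = 0, 1, 2, \ldots$; for each $j$ first test whether $j \in \imath(\Gamma)$, and, if so, compute $\imath(g \ast h \ast g)$ for $h = \imath^{-1}(j)$ by successive applications of the algorithm for $\ast$, and test whether the result equals $i$. Output the first $j$ that passes the test. Termination is guaranteed because $g^{-1} \in \Gamma$ has an index $j_{0} \in \imath(\Gamma)$, and the test succeeds as soon as $j$ reaches $j_{0}$. The main — and, in fact, only — subtle point is the identification of a decidable witness for "$h = g^{-1}$" that avoids the chicken-and-egg problem of needing the identity to verify an inverse; once the identity $g \ast h \ast g = g$ is in hand, everything else is a routine unbounded search over a decidable predicate.
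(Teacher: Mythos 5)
Your argument is correct and complete: the characterisation $h=g^{-1}\Leftrightarrow g\ast h\ast g=g$ is valid (left or right cancellation gives the converse), the predicate is decidable from the computability of $\ast$ and the injectivity of $\imath$, and the unbounded search terminates because $g^{-1}$ has an index. The paper itself gives no proof of this lemma, deferring entirely to Rabin's 1960 article, and your search-over-a-decidable-witness argument is the standard one found there; the only stylistic remark is that your trick of avoiding the identity element is not strictly necessary, since the index of $\ue$ could itself be found first by an analogous search for the $j$ with $\imath(g\ast\imath^{-1}(j))=\imath(g)$, but your one-pass predicate is cleaner.
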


$(\Gamma, \imath)$ is a computable space, and we can talk about computable subsets of $(\Gamma, \imath)$. A subgroup of $\Gamma$ which is a computable subset will be called a \textbf{computable subgroup}. A homomorphism between computable groups that is computable as a map between computable spaces will be called a \textbf{computable homomorphism}. The proof of the proposition below is straightforward.
\begin{prop}
Let $(\Gamma,\imath)$ be a computable group. Then the following assertions hold:
\begin{aufziii}
\item Given a computable set $A \subseteq \Gamma$ and $g \in \Gamma$, the sets $A^{-1}, gA$ and $Ag$ are computable;

\item Given a (canonically) computable sequence $(F_n)_{n \geq 1}$ of (finite) subsets  of $\Gamma$ and $g \in \Gamma$, the sequences $(g F_n)_{n \geq 1}, (F_n g)_{n \geq 1}$ are (canonically) computable.
\end{aufziii}
\end{prop}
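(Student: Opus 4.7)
The plan is to reduce each claim to computability of the multiplication map $\ast$ (part of the hypothesis that $(\Gamma,\imath)$ is computable) together with computability of the inversion map $\inv$ (the preceding lemma), and then invoke the standard fact that compositions and finite products of computable functions between computable spaces are again computable.

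For part (1), I would just rewrite each membership test in terms of $\ast$ and $\inv$: the indicator functions satisfy
$\indi{A^{-1}}(x) = \indi{A}(\inv(x))$, $\indi{gA}(x) = \indi{A}(\inv(g) \ast x)$, and $\indi{Ag}(x) = \indi{A}(x \ast \inv(g))$. Since $g$ is fixed, the constant map $x \mapsto \inv(g)$ is computable, and the right-hand side in each case is a composition of computable maps, hence computable. The computable-sequence part of (2) is the same argument carried out in the extra variable $n$: the function $(n,x) \mapsto \indi{gF_n}(x) = \indi{F_{\,\cdot}}(n, \inv(g) \ast x)$ is a composition of the assumed computable indicator function of $(F_n)_{n \geq 1}$ with computable left multiplication by $\inv(g)$ in the second coordinate, and the case of $(F_n g)_{n \geq 1}$ is symmetric.

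The only case that is more than a one-line verification is canonical computability of $(gF_n)_{n \geq 1}$ from canonical computability of $(F_n)_{n \geq 1}$. Here I would describe an explicit algorithm: given $n$, invoke the given algorithm to print $\cin{\imath(F_n)}$; read off its binary expansion to extract the finite list $\imath(F_n) = \{i_1,\dots,i_k\} \subset \N$; apply to each $i_j$ the computable function on indices induced by the left translation $L_g : \Gamma \to \Gamma$, $x \mapsto g \ast x$ (computable by the first part of the argument), obtaining the list $\imath(gF_n) = \{\imath(g x_{i_1}),\dots,\imath(g x_{i_k})\}$; finally output $\sum_{j=1}^k 2^{\imath(gx_{i_j})} = \cin{\imath(gF_n)}$. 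Each step is effective, so the overall function $n \mapsto \cin{\imath(gF_n)}$ is computable. The case $(F_n g)_{n \geq 1}$ is handled identically using $R_g$ in place of $L_g$.

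The main obstacle, such as it is, is bookkeeping: making sure that reading off the finite list of elements from a canonical index, and reassembling a canonical index from such a list, really are effective operations. Both are immediate from the definition of $\cin{\cdot}$ via binary expansion, so no genuine difficulty arises; the whole proposition is essentially a collection of closure properties of computable functions.
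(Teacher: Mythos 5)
Your proof is correct and is exactly the argument the paper has in mind: the paper omits the proof entirely, remarking only that it is straightforward, and your reduction to compositions with $\ast$ and $\inv$ (using the preceding lemma) together with the explicit handling of canonical indices is the standard way to fill it in.
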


It is interesting to see that a computable version of the `First Isomorphism Theorem' also holds.
\begin{thm}
Let $(G,\imath)$ be a computable group and let $(H,\imath|_H)$ be a computable normal subgroup, where $\imath|_H$ is the restriction of the indexing function $\imath$ to $H$. Then there is a compatible indexing function $\imath'$ on the factor group $\fact G H$ such that the quotient map $\pi: (G,\imath) \to (\fact G H, \imath')$ is a computable homomorphism.
\end{thm}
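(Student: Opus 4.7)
The plan is to define the indexing $\imath'$ on $\fact G H$ by choosing, for each coset, the element of minimum $\imath$-index as its canonical representative.

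First, I would construct a total computable function $\mathrm{rep}: (G,\imath) \to (G,\imath)$ with the property that $\mathrm{rep}(g)$ is the element of $gH$ having the smallest $\imath$-index. To compute $\mathrm{rep}(g)$, enumerate $n = 0, 1, 2, \ldots$; for each $n$, first check whether $n \in \imath(G)$ (decidable since $\imath(G)$ is computable) and, if so, test whether $g^{-1} g_n \in H$. This latter test is decidable because multiplication and inversion are computable on $(G,\imath)$ (admissibility plus the preceding lemma) and $H$ is a computable subset. The search terminates at some $n \leq \imath(g)$, since $g$ itself lies in $gH$. By construction $\mathrm{rep}(g_1) = \mathrm{rep}(g_2)$ if and only if $g_1 H = g_2 H$.

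Next, the set $R := \{ g \in G : \mathrm{rep}(g) = g \}$ of canonical representatives is a computable subset of $(G,\imath)$: given $g$, compute $\mathrm{rep}(g)$ and compare indices. Since $\mathrm{rep}$ induces a bijection $\fact G H \to R$, I define
\begin{equation*}
\imath': \fact G H \to \N, \qquad \imath'(gH) := \imath(\mathrm{rep}(g)).
\end{equation*}
Its image equals $\imath(R)$, which is a computable subset of $\N$, and $\imath'$ is injective, so $\imath'$ is a legitimate indexing of $\fact G H$.

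Finally, I would verify the two remaining computability claims. For the group operation on $\fact G H$: given $\imath'(g_1 H), \imath'(g_2 H) \in \imath(R)$, recover $\mathrm{rep}(g_1)$ and $\mathrm{rep}(g_2)$ in $(G,\imath)$, compute their product using admissibility of $\imath$, and apply $\mathrm{rep}$; the resulting $\imath$-index is by definition $\imath'$ of the product coset, so multiplication in $(\fact G H, \imath')$ is computable, i.e.\ $\imath'$ is admissible. For the quotient map, at the index level $\pi$ sends $\imath(g)$ to $\imath(\mathrm{rep}(g))$, which is computable by construction of $\mathrm{rep}$. The only nontrivial point is the termination of the $\mathrm{rep}$-algorithm, which rests on the fact that both $\imath(G) \subseteq \N$ and $H \subseteq G$ are computable — hence decidable — and every coset $gH$ is nonempty.
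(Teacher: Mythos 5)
Your proof is correct; the paper itself gives no argument here but defers to Theorem 1 of Rabin's 1960 paper, whose proof is exactly your construction of indexing each coset by the least $\imath$-index of its elements. All the key points are in place: termination of the $\mathrm{rep}$-search at $n\leq\imath(g)$, computability of the set $R$ of canonical representatives, and the use of normality of $H$ to see that $\mathrm{rep}(g_1)\mathrm{rep}(g_2)$ lies in the coset $g_1g_2H$, so that applying $\mathrm{rep}$ to the product of representatives computes the product in $(\fact{G}{H},\imath')$.
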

For the proof we refer the reader to the Theorem 1 in \cite{rabin1960}.

\subsection{Computable F{\o}lner sequences and computable monotilings}
\label{ss.compfoln}
The notions of an amenable group and a F{\o}lner sequence are well-known, but, since we are working with computable groups, we need to develop their `computable' versions.

Let $(\Gamma,\imath)$ be a computable group. A F{\o}lner monotiling $([F_n, \calZ_n])_{n \geq 1}$ of $\Gamma$ is called \textbf{computable} if
\begin{aufzi}
\item $(F_n)_{n \geq 1}$ is a canonically computable sequence of finite subsets of $\Gamma$;
\item $(\calZ_n)_{n \geq 1}$ is a computable sequence of subsets of $\Gamma$.
\end{aufzi}

\begin{exa}
\label{ex.zdmonot}
Consider the group $\Z^d$ for some $d \geq 1$. Let $\imath$ be any admissible indexing such that all the coordinate projections $\Z^d \to \Z$ are computable. Then the F{\o}lner sequence $F_n = [0,1,2,\dots,n-1]^d$ is canonically computable. Furthermore, the set $\calZ_n$ of centers equals $n \Z^d$ for every $n$, hence $([F_n,\calZ_n])_{n \geq 1}$ is a computable normal F{\o}lner monotiling.
\end{exa}

In general, the procedures computing F{\o}lner monotilings might be difficult to write out explicitly and, given a F{\o}lner sequence, there might be different corresponding sequences of tilings. However, for the groups $\UT{d}{\Z}$ of upper-triangular matrices with integer entries there exist very `natural' F{\o}lner monotilings that can be easily computed. For details we refer the reader to the work \cite{golodets2002} of Ya. Golodets and S. D. Sinelshchikov.

The main results of this paper are proved for groups that admit a computable \emph{normal} F{\o}lner monotiling. In the lemma below we show that this requirement is not restrictive.
\begin{lemma}[Normalization]
\label{l.normalization}
Let $(\Gamma,\imath)$ be a computable group and $([F_n,\calZ_n])_{n \geq 1}$ be a computable F{\o}lner monotiling. Then there is a computable function $r_{\cdot}: \N \to \Gamma$ and a computable function $n_{\cdot}: \N \to \N$ such that $([F_{n_i} r_{n_i}^{-1}, r_{n_i} \calZ_{n_i}])_{i\geq 1}$ is a computable normal F{\o}lner monotiling.
\end{lemma}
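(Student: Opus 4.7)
The plan is to replace each tile $F_n$ by a right translate containing the identity, and to discard enough tiles so that $\cntm{F_n}/\log i$ blows up. Right translation sends a monotiling $[F, \calZ]$ to $[F r^{-1}, r \calZ]$ because $(F r^{-1})(r z) = F z$, so picking $r_{n_i} \in F_{n_i}$ automatically ensures $\ue \in F_{n_i} r_{n_i}^{-1}$, while the F{\o}lner property is insensitive to right translation. All that is left is to make the subsequence and the selection of $r_{n_i}$ effective.

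Concretely, I will first observe that canonical computability of $(F_n)_{n \geq 1}$ makes the function $n \mapsto \cntm{F_n}$ computable (count the elements of $\imath(F_n)$), and recall from Section \ref{ss.amengr} that $\cntm{F_n} \to \infty$ since $\Gamma$ is infinite and $(F_n)$ is F{\o}lner. Hence $n_i := \min\{n : \cntm{F_n} \geq i\}$ is well defined, $i \mapsto n_i$ is computable by unbounded search, and $n_i \to \infty$. I then set $r_{n_i} := \imath^{-1}(\min \imath(F_{n_i}))$, which is computable from the canonical description of $F_{n_i}$. The new tile $F_{n_i} r_{n_i}^{-1}$ is produced canonically from $F_{n_i}$ by inverting $r_{n_i}$ (the lemma in Section \ref{ss.compgrp}) and multiplying on the right using the computable group operation; the new centers form a computable sequence of sets because $\indi{r_{n_i} \calZ_{n_i}}(x) = \indi{\calZ_{n_i}}(r_{n_i}^{-1} x)$ is a composition of computable maps in $(i, x)$.

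Verifying the normal F{\o}lner monotiling axioms is then routine. The monotiling property transfers verbatim via the identity $(F_{n_i} r_{n_i}^{-1})(r_{n_i} z) = F_{n_i} z$. The F{\o}lner property follows from $\cntm{F_{n_i} r_{n_i}^{-1} \sdif K F_{n_i} r_{n_i}^{-1}} = \cntm{F_{n_i} \sdif K F_{n_i}}$ together with $n_i \to \infty$. Normality holds by construction: $\cntm{F_{n_i} r_{n_i}^{-1}} = \cntm{F_{n_i}} \geq i$, so $\cntm{F_{n_i} r_{n_i}^{-1}}/\log i \to \infty$, and $\ue = r_{n_i} \cdot r_{n_i}^{-1} \in F_{n_i} r_{n_i}^{-1}$ because $r_{n_i} \in F_{n_i}$. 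No step is really an obstacle; the only point worth underlining is that canonical (rather than plain) computability of $(F_n)$ is exactly what is needed to compute the sizes $\cntm{F_n}$ and to extract a specific element $r_{n_i} \in F_{n_i}$, both of which are indispensable for the effective selection step.
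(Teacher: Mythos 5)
Your proof is correct and follows essentially the same route as the paper: choose $r_n$ as the first element of $F_n$ under the indexing so that $\ue \in F_n r_n^{-1}$, and pass to a computable subsequence to secure the growth condition. You merely make explicit (via $n_i := \min\{n : \cntm{F_n} \geq i\}$ and the computability of $n \mapsto \cntm{F_n}$ from canonical computability) what the paper leaves as ``it is clear''.
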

\begin{proof}
Let $r_i$ be the first element of the set $F_i$ for every $i$ when we view $F_i$ as a subset of $\N$ via the indexing mapping $\imath$. Then $(F_n r_n^{-1})_{n \geq 1}$ is a F{\o}lner sequence such that $\ue \in F_n r_n^{-1}$ for every $n$, and $([F_n r_n^{-1}, r_n \calZ_n])_{n \geq 1}$ is a F{\o}lner monotiling. It is clear that we can pick the function $n_{\cdot}$ such that the growth condition is satisfied as well.
\end{proof}

The following lemma shows that we can `computably' determine which elements of canonically computable F{\o}lner sequences are `invariant enough' in computable groups.
\begin{prop}
\label{prop.indexing}
Let $(F_n)_{n \geq 1}$ be a canonically computable F{\o}lner sequence in a computable group $(\Gamma, \imath)$. Then there is a computable function $i \mapsto n_i$ such that
\begin{equation}
\label{eq.indexing}
\frac{\cntm{F_{n_i} \sdif g F_{n_i}}}{\cntm{F_{n_i}}} < \frac 1 {2 i}
\end{equation}
for all $g \in K_i$ and all $i$, where $K_i$ is the set of the first $i$ elements of $\Gamma$ with respect to the indexing $\imath$.
\end{prop}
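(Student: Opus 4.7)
The plan is to define $n_i$ by unbounded search: given $i$, enumerate $n=1,2,3,\dots$ and test, for each $n$, whether the inequality \eqref{eq.indexing} holds for every $g \in K_i$; let $n_i$ be the first $n$ passing the test. Correctness then splits into two parts: (i) the test is effectively decidable in the pair $(n,i)$, which makes $n_i$ at worst a partial computable function of $i$; (ii) the search always terminates, so the function is total.

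For (i), we first note that the enumeration $j \mapsto \imath^{-1}(j)$ of $\Gamma$ is computable (since $\imath(\Gamma)$ is a computable subset of $\N$), hence $i \mapsto \cin{\imath(K_i)}$ is computable, i.e.\ the sequence $(K_i)_{i\geq 1}$ is canonically computable. Since $(F_n)_{n \geq 1}$ is canonically computable, we can print $\imath(F_n)$ given $n$. Because multiplication $\ast$ is a computable operation on $(\Gamma,\imath)$, from $g$ and $n$ we can print $\imath(gF_n)$; by the proposition on (canonically) computable sequences of sets, the sequence $(F_n \sdif gF_n)$ is canonically computable in $(n,g)$, so the cardinality $\cntm{F_n \sdif g F_n}$ and the cardinality $\cntm{F_n}$ are effectively obtained as natural numbers. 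The inequality \eqref{eq.indexing} is equivalent to the purely integer inequality $2 i \cdot \cntm{F_n \sdif g F_n} < \cntm{F_n}$, and we quantify it over the finite, canonically enumerated set $K_i$. Thus the predicate
\begin{equation*}
P(n,i) \;:\desda\; \forall g \in K_i \colon\; 2i \cdot \cntm{F_n \sdif g F_n} < \cntm{F_n}
\end{equation*}
is computable, and $n_i := \min\{ n \geq 1 : P(n,i) \}$ is a partial computable function of $i$.

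For (ii), apply the F{\o}lner property to the finite set $K_i$: for each $g \in K_i$ one has $\cntm{F_n \sdif g F_n}/\cntm{F_n} \to 0$ as $n \to \infty$, and taking the maximum over the finitely many $g \in K_i$ yields some $N(i)$ such that $P(n,i)$ holds for all $n \geq N(i)$. Hence the search terminates for every $i$, and $n_i$ is total.

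There is no real obstacle here beyond bookkeeping; the only point that needs attention is that every auxiliary quantity entering the test (enumeration of $K_i$, the translate $gF_n$, the symmetric difference, its cardinality) is effectively available from the hypotheses, which is exactly what canonical computability of $(F_n)_{n\geq 1}$ together with computability of the group operation guarantee via the proposition on (canonically) computable sequences of sets.
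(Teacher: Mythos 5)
Your proposal is correct and follows exactly the paper's argument: the paper also defines $n_i$ as the first index for which the inequality holds for all $g \in K_i$, noting that such an index exists by the F{\o}lner property. You merely spell out the (routine) verification that the search predicate is decidable from canonical computability of $(F_n)_{n\geq 1}$ and computability of the group operation, which the paper leaves implicit.
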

\begin{proof}
For every $i$ we let $K_i$ be the finite set defined above. We let $n_i$ be the first index such that (\ref{eq.indexing}) holds for all $g \in K_i$ (such $n_i$ exists because $(F_n)_{n \geq 1}$ is a F{\o}lner sequence).
\end{proof}

The following lemma simplifies checking the computability of $(\calZ_n)_{n \geq 1}$.
\begin{prop}
\label{prop.monoteq}
Let $(\Gamma, \imath)$ be a computable group. Let $([F_n,\calZ_n])_{n \geq 1}$ be a F{\o}lner monotiling of $\Gamma$ such that $(F_n)_{n \geq 1}$ is a canonically computable sequence of finite sets and $\ue \in F_n$ for all $n \geq 1$. Then the following assertions are equivalent:
\begin{aufzii}
\item There is a total computable function $\phi: \N^2 \to \Gamma$ such that for every $n \in \N$
\begin{equation*}
\calZ_n = \{ \phi(n,1),\phi(n,2), \dots\}.
\end{equation*}
\item The sequence of sets $(\calZ_n)_{n \geq 1}$ is computable.
\end{aufzii}
\end{prop}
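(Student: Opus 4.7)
The plan is to prove the two implications separately, with the direction (i) $\Rightarrow$ (ii) being the substantive one where the monotiling structure is essential.

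For the easy direction (ii) $\Rightarrow$ (i), first note that $\calZ_n$ is infinite for every $n$: the translates $\{F_n z : z \in \calZ_n\}$ partition the infinite group $\Gamma$ into sets of common finite cardinality $\cntm{F_n}$. Thus, given $n$ and $k$, I can define $\phi(n,k)$ by searching through the elements of $\Gamma$ in order of their indices $\imath$ and returning the $k$-th element $g$ for which the computable indicator $\indi{\calZ_n}(g)$ evaluates to $1$. This search terminates because $\calZ_n$ is infinite, and it uses only the computability of $\imath$ on $\Gamma$ together with the assumption (ii). So $\phi$ is total computable and enumerates each $\calZ_n$.

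For (i) $\Rightarrow$ (ii), the goal is a decision procedure for $g \in \calZ_n$ from an enumeration. The key observation is that since $\ue \in F_n$ and $\{F_n z : z \in \calZ_n\}$ is a \emph{disjoint} cover of $\Gamma$, for every $g \in \Gamma$ there is a \textbf{unique} center $z \in \calZ_n$ with $g \in F_n z$, equivalently, a unique $z \in \calZ_n \cap F_n^{-1}g$. This gives the algorithm: on input $(n,g)$, first compute the canonical index of $F_n^{-1}g$ (this is legitimate because $(F_n)_{n \geq 1}$ is canonically computable, the inverse operation is computable by the lemma in Section~\ref{ss.compgrp}, and right translation preserves canonical computability by the previous proposition). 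Then enumerate $\phi(n,1),\phi(n,2),\dots$ and, for each value, test whether it lies in the finite set $F_n^{-1}g$ (membership in a canonically computable finite set is decidable because we can read off the full list of its elements). By the existence-and-uniqueness observation, this search terminates with the unique $z \in \calZ_n \cap F_n^{-1}g$. Finally, return $1$ if $z = g$ and $0$ otherwise, using that equality in $(\Gamma,\imath)$ is decidable.

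The main obstacle is ensuring termination of the search in the direction (i) $\Rightarrow$ (ii): an enumeration alone cannot decide non-membership without extra information, and it is precisely the monotiling identity $\Gamma = \bigsqcup_{z \in \calZ_n} F_n z$ combined with $\ue \in F_n$ that certifies a specific moment at which we have learned enough to answer. Once this is in place, the remaining steps are routine verifications that the auxiliary objects ($F_n^{-1}g$, membership tests, equality tests) are computable, each of which follows from results already established in Sections~\ref{ss.compspaces} and \ref{ss.compgrp}.
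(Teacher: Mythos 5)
Your proof is correct and follows essentially the same route as the paper: enumerate the centers $\phi(n,1),\phi(n,2),\dots$ until the unique tile containing $g$ is found (termination from $\Gamma=F_n\calZ_n$, uniqueness from disjointness), then answer according to whether $g$ is that tile's center, using $\ue\in F_n$. Your membership test $\phi(n,i)\in F_n^{-1}g$ is just the paper's test $g\in F_n\phi(n,i)$ rewritten via the computable inverse, and you additionally spell out the converse direction that the paper dismisses as clear.
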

\begin{proof}
One implication is clear. For the converse, note that to prove computability of the function $\indi{\calZ_{\cdot}}$ we have to devise an algorithm that, given $n\in \N$ and $g \in \Gamma$, decides whether $g \in \calZ_n$ or not. Let $\phi: \N^2 \to \Gamma$ be the function from assertion (i). Then the following algorithm answers the question. Start with $i:=1$ and compute $\ue \phi(n,i), h_{1,n} \phi(n,i),\dots, h_{k,n} \phi(n,i)$, where $F_n=\{ \ue, h_{1,n},\dots,h_{k,n}\}$. This is possible since $(F_n)_{n \geq 1}$ is a canonically computable sequence of finite sets. If $g = \ue \phi(n,i) $, then the answer is `Yes' and we stop the program. If $g =   h_{j,n} \phi(n,i)$ for some $j$, then the answer is `No' and we stop the program. If neither is true, then we set $i:=i+1$ and go to the beginning.

Since $\Gamma =  F_n \calZ_n$, the algorithm terminates for every input.
\end{proof}

The following theorem, whose proof is essentially due to B. Weiss \cite{weiss2001}, shows that the class of groups admitting computable normal F{\o}lner monotilings is closed under group extensions.
\begin{thm}
\label{thm.weiss}
Let
\begin{equation*}
1 \to (E,\imath_E) \overset{\id}{\to} (F,\imath_F) \overset{\psi}{\to} (G,\imath_G) \to 1
\end{equation*}
be an exact sequence of computable groups such that $\id, \psi$ are computable homomorphisms. Suppose that $([E_k,\calQ_k])_{k \geq 1}, ([G_m,\calS_m])_{m \geq 1}$ are computable normal F{\o}lner monotilings of the groups $(E,\imath_E)$ and $(G,\imath_G)$ respectively. Then there is a computable normal F{\o}lner monotiling $([F_l,\calR_l])_{l \geq 1}$ in the group $(F,\imath_F)$.
\end{thm}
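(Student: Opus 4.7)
The plan is to reproduce the construction of B.~Weiss from \cite{weiss2001} and to verify at every stage that the objects produced remain computable with respect to $\imath_F$. The central new ingredient is a \emph{computable} section $\sigma\colon (G,\imath_G)\to(F,\imath_F)$ of $\psi$, which I would obtain by an unbounded search: given $g\in G$, enumerate $F$ in the order of $\imath_F$ and let $\sigma(g)$ be the first $f$ with $\psi(f)=g$; this search terminates because $\psi$ is a total computable surjection. By redefining one value we may assume $\sigma(\ue_G)=\ue_F$. Using $\sigma$, every $f\in F$ factors uniquely as $f=e\cdot\sigma(\psi(f))$ with $e\in E$ computable from $f$, and this factorisation interacts with the normal subgroup $E$ through the cocycle $c(g_1,g_2):=\sigma(g_1)\sigma(g_2)\sigma(g_1g_2)^{-1}\in E$, which is itself computable in its arguments.

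For the tiles I would take $F_{k,m}:=E_k\cdot\sigma(G_m)$, which is a canonically computable sequence of finite subsets of $F$ because $(E_k)_k$ and $(G_m)_m$ are, $\sigma$ is computable, and multiplication in $F$ is computable. The centres $\calR_{k,m}$ would have the shape $\calQ_k\cdot\sigma(\calS_m)$, rearranged using the cocycle so as to yield a genuine disjoint cover of $F$; this is where the bulk of Weiss's argument enters. Concretely, writing $\psi(f)=hs$ with $h\in G_m, s\in\calS_m$ gives $f=ec(h,s)^{-1}\sigma(h)\sigma(s)$, and decomposing $ec(h,s)^{-1}=rq$ with $r\in E_k, q\in\calQ_k$ and using normality of $E$ to push $q$ past $\sigma(h)\sigma(s)$ yields a presentation of the form $f=r\sigma(h)\sigma(s)\cdot q'$ with $q'\in E$. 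Provided $k$ is chosen as a sufficiently large computable function $k(m)$ of $m$, the tile $E_k$ absorbs the finitely many cocycle values appearing on $G_m\times\calS_m$ and the rearrangement delivers a valid monotiling. Thinning the index set further so that $\cntm{F_l}/\log l\to\infty$, which is possible because both $\cntm{E_k}$ and $\cntm{G_m}$ tend to infinity, yields the required normal monotiling $([F_l,\calR_l])_l$. The F{\o}lner property of $(F_l)_l$ then reduces to the F{\o}lner properties of $(E_{k(m_l)})_l$ in $E$ and $(G_{m_l})_l$ in $G$ (cf.\ Proposition~\ref{prop.indexing}), since left multiplication by $g\in F$ shifts the $G$-component by $\psi(g)$ and perturbs the $E$-component only boundedly thanks to normality of $E$.

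The main obstacle is the algebraic core of the construction of $\calR_{k,m}$: arranging the centres so that the cocycle corrections are absorbed by the $E$-fibre while the family $\{F_{k,m}\cdot z:z\in\calR_{k,m}\}$ remains a \emph{disjoint} cover of $F$. Once the appropriate computable $k(m)$ is located by an effective search over the canonically computable data $(E_k)_k,(\calQ_k)_k,(G_m)_m,(\calS_m)_m$ and the computable maps $\sigma$, $\psi$ and multiplication in $F$, all remaining tasks are computability bookkeeping: $(F_l)_l$ is canonically computable as a product of two canonically computable sequences under the computable multiplication in $(F,\imath_F)$, and for $(\calR_l)_l$ Proposition~\ref{prop.monoteq} reduces computability to producing a total computable enumeration, which is straightforward from the computable enumerations of $\calQ_{k(m_l)}$ and $\calS_{m_l}$, the computability of $\sigma$, and the computability of multiplication in $F$.
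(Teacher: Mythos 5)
Your overall strategy --- effectivize Weiss's construction via a computable section of $\psi$ obtained by unbounded search, take product tiles, and choose the $E$-tile as a computable function of the $G$-tile so as to absorb finitely many correction elements --- is the same as the paper's. But the step you yourself flag as ``the main obstacle'' is genuinely left open, and as you have set things up it does not go through. You take the tile in the order $F_{k,m}=E_k\cdot\sigma(G_m)$. For the F{\o}lner estimate you must then control $g\cdot e\,\sigma(h)$ for $g$ in a finite subset of $F$, and rewriting this in the form (element of $E$)(element of $\sigma(G_m)$) gives
$e_g\,\bigl(\sigma(\psi(g))\,e\,\sigma(\psi(g))^{-1}\bigr)\,c(\psi(g),h)\,\sigma(\psi(g)h)$:
the $E$-component is not merely translated but \emph{conjugated} by $\sigma(\psi(g))$. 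A F{\o}lner sequence in $E$ need not be almost invariant under these automorphisms, so ``perturbs the $E$-component only boundedly thanks to normality'' is not justified. Moreover, the finite set you propose to absorb (cocycle values on $G_m\times\calS_m$) is not the one that matters.

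The paper (following Weiss) avoids both problems by reversing the order: the tile is $F_l=T_l\,E_{k^{\ast}_l}$ with the section $T_l$ of the $G$-tile on the \emph{left} and the $E$-tile on the right, and the centres are $\calQ_{k^{\ast}_l}\,\sigma(\calS_{m^{\ast}_l})$. With this order the tiling property is automatic and needs no cocycle rearrangement: $T_lE_{k^{\ast}_l}\,q\,\sigma(s)=T_l(E_{k^{\ast}_l}q)\sigma(s)$, the union over $q\in\calQ_{k^{\ast}_l}$ is $T_lE\,\sigma(s)=\psi^{-1}(G_{m^{\ast}_l}s)$, and disjointness is inherited from the two given monotilings. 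For the F{\o}lner property one writes $x\,t\,e=\lambda_l(x,t)\,\rho_l(x,t)\,e$ with $\lambda_l(x,t)\in T_l$ and $\rho_l(x,t)$ ranging over a finite, canonically computable set $P_l\subseteq E$ determined by $K_l$ and the interior of the $G$-tile; the $E$-component is only \emph{left-translated} by elements of $P_l$, and $k^{\ast}_l$ is then chosen computably so that $E_{k^{\ast}_l}$ is almost invariant under $P_l$. That is the missing algebraic core. The remaining items in your outline (the computable section, canonical computability of the product tiles, Proposition \ref{prop.monoteq} for the centres, thinning to get normality) do match the paper.
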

\begin{proof}
We begin by describing an auxiliary construction that provides us with `computable' sections of computable sets in $F$ over $G$. Let $T \subseteq F$ be a computable set. Let $\indi{T}$ be the characteristic function of $T$. Now we construct a characteristic function of a computable section $T'$ of $T$ as follows:
\begin{equation*}
\indi{T'}(n) :=  \begin{cases}
    1 & \text{if } \indi{T}(n)=1 \text{ and } ((\forall \ l < n \ \ \psi(n)\neq \psi(l)) \vee (n=\ue));\\
    0 &  \text{otherwise.}
  \end{cases}
\end{equation*}
That is, $T'$ is the set of the first members of each $E$-coset in $T$ except for the coset $\ue E$, on which we pick $\ue$ instead if $\ue \in T$. Since the functions $\indi{T}, \psi$ are computable, it is easy to see that $\indi{T'}$ is computable as well. In particular, the function $x \mapsto \psi^{-1}(x)'$ from $G$ to $F$ is computable.

Let $l \in \N$ be fixed. Let $K_l=\{f_1,f_2,\dots,f_l \} \subset F$ be the set of the first $l$ elements of $F$ with respect to the indexing $\imath_F$. We will describe an algorithm that yields a tile $F_l \subset F$ such that for all $g \in K_l$ we have
\begin{equation*}
\frac{\cntm{F_l \sdif g F_l}}{\cntm{F_l}} \leq \frac 1 l.
\end{equation*}
It is easy to see that such a sequence $(F_l)_{l \geq 1}$ is a canonically computable F{\o}lner sequence. We will use Proposition \ref{prop.monoteq} to show that the corresponding sequence of centers $(\calR_l)_{l \geq 1}$ is computable, and it will be clear from the proof why the monotiling $([F_l,\calR_l])_{l \geq 1}$ is normal as well.

Consider the finite set $\psi(K_l) \subset G$. Then the maximum $I_l \geq l$ of the indices of elements of $\psi(K_l)$ is a computable function of $l$. We let $Q_l:=\{ g_1,g_2,\dots,g_{I_l} \}$ be the finite set of the first $I_l$ elements of $G$. Let $m_{\cdot}: i \mapsto m_i$ be the computable function from the Proposition \ref{prop.indexing} applied to $(G,\imath_G)$ and $(G_m)_{m \geq 1}$, then the function $m_{\cdot}^{\ast}: l \mapsto \max(m_{I_l},2l)$ is a computable function.

Let $G_{\mia{l}}$ be the corresponding F{\o}lner tile in $G$. Consider its preimage $\psi^{-1}(G_{\mia{l}})$, which is a computable subset of $F$. We note that the sequence of sets $l \mapsto \psi^{-1} (G_{\mia{l}})$ is computable because the sequence of sets $(G_m)_{m \geq 1}$ is computable and the functions $\psi, \mia{\cdot}$ are computable. Let $T_l \subset \psi^{-1}(G_{\mia{l}})$ be the computable section of $\psi^{-1}(G_{\mia{l}})$ over $G$ as defined above, then $\psi$ is bijective as a map from $T_l$ to $G_{\mia{l}}$. Observe further that the sequence of \emph{finite} sets $l \mapsto T_l$ is \emph{canonically} computable. B. Weiss proved  that if $U$ is a sufficiently invariant monotile in $E$, then $T_l U$ is a sufficiently invariant monotile in $F$. Below we examine his construction closely.

Let $\gint{l}:=\{ t \in G_{\mia{l}}: Q_l t \subset G_{\mia{l}} \} \subset G$ be the part of $G_{\mia{l}}$ that stays within $G_{\mia{l}}$ when shifted by elements of $Q_l$. It is clear that $T_l^{\circ}:=\psi^{-1}(\gint{l})' \subset T_l$, and that the sequence of finite sets $l \mapsto \tint{l}$ is canonically computable. For all $x \in K_l$ and $t \in \tint{l}$ we deduce that
\begin{equation*}
x t = \lambda_l(x,t) \rho_l(x,t),
\end{equation*}
where $\lambda_l(x,t) \in T_l$ and $\rho_l(x,t) \in E$. The functions $\lambda_{\cdot}(\cdot,\cdot)$ and $\rho_{\cdot}(\cdot,\cdot)$ are uniquely determined by this condition and are partial computable. Consider the finite subset $$P_l:=\{ \rho_l(x,t): x \in K_l, t\in \tint{l} \}.$$ The maximum index $J_l$ of elements of this set is a computable function of $l$. Let $k_{\cdot}: i\mapsto k_i$ be the computable function from the Proposition \ref{prop.indexing} applied to $(E,\imath_E)$ and $(E_k)_{k \geq 1}$, then the function $\kia{\cdot}: l \mapsto \max(k_{J_l}, \mia{l})$ is computable. Consider the F{\o}lner tile $E_{\kia{l}}$, and let $E_{\kia{l}}^{\circ}:=\{ s \in E_{\kia{l}}: P_l s \subset E_{\kia{l}}\}$ be the part of $E_{\kia{l}}$ that stays in $E_{\kia{l}}$ when shifted by elements of $P_l$. We claim that the tile $$F_l:=T_l E_{\kia{l}}$$ is `invariant enough'. Observe that $$K_l \tint{l} E_{\kia{l}}^{\circ} \subset T_l E_{\kia{l}}.$$
By definition, the set $\gint{l}$ is large enough:
\begin{equation*}
\cntm{\gint{l}} \geq \left(1-\frac{1}{2 l}\right)\cntm{G_{\mia{l}}},
\end{equation*}
hence
\begin{equation*}
\cntm{\tint{l}} \geq \left(1-\frac{1}{2 l}\right)\cntm{T_l}.
\end{equation*}
Similarly,
\begin{equation*}
\cntm{E_{\kia{l}}^{\circ}} \geq \left(1-\frac{1}{2 l}\right)\cntm{E_{\kia{l}}},
\end{equation*}
and it follows that
\begin{equation*}
\cntm{T_l^{\circ} E_{\kia{l}}^{\circ}} \geq \left(1-\frac 1 {2 l}\right)\cntm{ T_l E_{\kia{l}}}.
\end{equation*}
We deduce that for every $g \in K_l$
\begin{align*}
\frac{\cntm{F_l \sdif g F_l}}{\cntm{F_l}} = \frac{\cntm{F_l \sdif g^{-1} F_l}}{\cntm{F_l}} \leq \frac{1}{l},
\end{align*}
and this shows that $F_l$ is `invariant enough'. We have obtained a canonically computable F{\o}lner sequence $l \mapsto F_l$.

It remains to prove that for each $l$ the set $F_l$ is a tile and that the sequence of centers $(\calR_l)_{l \geq 1}$ is computable. Let $\phi_E, \phi_G$ be computable functions from the Proposition \ref{prop.monoteq} applied to groups $(E,\imath_E),(G,\imath_G)$ respectively. Let $\theta: \N^2 \to F$ be the total computable function $(n,i) \mapsto \psi^{-1}(\phi_G(n,i))'$, i.e. we compute $\phi_G(n,i)$ first and then pick an element in its fiber in $F$. It is clear that
\begin{equation*}
\{ \phi_E(\kia{l},i) \theta(\mia{l},j )\}_{i,j \geq 1}
\end{equation*}
is a set of centers for the tile $F_l$. If $\nu: \N \to \N^2$ is any computable bijection, then the total computable function $\phi_F: (n,i) \mapsto \phi_E(\kia{n}, \nu_1(i)) \theta(\mia{n}, \nu_2(i))$ satisfies the conditions of Proposition \ref{prop.monoteq}, and the proof is complete.

\end{proof}

\section{Brudno's theorem}
\label{s.brudnocfm}
We are now ready to prove the main theorem of this article. First, we will explain the definitions and introduce some notation that will be used in the proofs.

By a \textbf{subshift} $\bfX=(\prX,\Gamma)$ we mean a closed $\Gamma$-invariant subset $\prX$ of $\Lambda^{\Gamma}$, where $\Lambda$ is the finite \textbf{alphabet} of $\bfX$. The words consisting of letters from the alphabet $\Lambda$ will be often called \textbf{$\Lambda$-words}. Of course, we can assume without loss of generality that $\Lambda = \{ 1,2,\dots,k\}$ for some $k$. The left action of the group $\Gamma$ on $\prX$ is given by
\begin{equation*}
(g\cdot \omega)(x) := \omega(x g) \ \ \ \forall x,g \in \Gamma, \omega \in \prX.
\end{equation*}
We can associate a word presheaf $\calF_{\Lambda}$ to the subshift $\bfX$ by setting
\begin{equation}
\calF_{\Lambda}(F):=\{ \omega|_F: \omega \in \prX\}.
\end{equation}
That is, $\calF_{\Lambda}(F)$ is the set of all restrictions of words in $\prX$ to the set $F$ for every computable $F$.

The goal of this section is to prove the following:
\begin{thm}
\label{thm.brudno}
Let $(\Gamma,\imath)$ be a computable group with a fixed computable normal F{\o}lner monotiling $([F_n,\calZ_n])_{n \geq 1}$. Let $\bfX=(\prX,\Gamma)$ be a subshift on $\Gamma$ and $\calF_{\Lambda}$ be the associated word presheaf on $\Gamma$. Then
\begin{equation*}
\kcs{\calF_{\Lambda}} = h(\bfX),
\end{equation*}
where the asymptotic complexity of the word presheaf $\calF_{\Lambda}$ is computed along the sequence $(F_n)_{n \geq 1}$.
\end{thm}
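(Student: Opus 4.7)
The strategy is to prove the two inequalities $h(\bfX)\le\kcs{\calF_\Lambda}$ and $\kcs{\calF_\Lambda}\le h(\bfX)$ separately; the first is an easy counting argument, while the second requires an explicit decompressor adapted to the monotiling.

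For the easy direction, I use that the clopen partition $\alpha=\{[\lambda]:\lambda\in\Lambda\}$ with $[\lambda]=\{\omega:\omega(\ue)=\lambda\}$ satisfies $g^{-1}[\lambda]=\{\omega:\omega(g)=\lambda\}$, so $|\alpha^{F_n}|=|\calF_\Lambda(F_n)|$, and hence $h(\bfX)=\lim_n\log|\calF_\Lambda(F_n)|/|F_n|$ (the limit exists by Ornstein--Weiss). Since at most $2^{N+1}$ distinct finite words can have Kolmogorov complexity $\le N$ under $\aast$, any bound $\max_{\omega\in\calF_\Lambda(F_n)}\uK(\omega,F_n)\le(\kcs{\calF_\Lambda}+\varepsilon)|F_n|$ valid for all sufficiently large $n$ forces $|\calF_\Lambda(F_n)|\le 2^{(\kcs{\calF_\Lambda}+\varepsilon)|F_n|+1}$, whence $h(\bfX)\le\kcs{\calF_\Lambda}+\varepsilon$; letting $\varepsilon\downarrow 0$ closes this direction.

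For the harder direction, fix $\varepsilon>0$ and pick $k$ so large that both $\log N_k/|F_k|<h(\bfX)+\varepsilon$ and $1/|F_k|<\varepsilon$, where $N_k:=|\calF_\Lambda(F_k)|$. Given $\omega\in\calF_\Lambda(F_n)$, consider the interior centers $Z_{n,k}:=\calZ_k\cap\kintl{F_k}{F_n}$; Proposition \ref{prop.fmonot} guarantees that $\{F_k z:z\in Z_{n,k}\}$ is a disjoint subfamily of tiles contained in $F_n$ with $|F_k|\cdot|Z_{n,k}|/|F_n|\to 1$. For each such $z$ the tile-word $g\mapsto\omega(gz)$ lies in $\calF_\Lambda(F_k)$, since it equals $(z\cdot\tilde\omega)|_{F_k}$ for any $\tilde\omega\in\prX$ extending $\omega$. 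I transmit $\omega$ as a binary string made of four concatenated blocks: (i) the prefix-free encodings $\widehat k$ and $\widehat n$; (ii) the cardinality $\widehat{|L_\omega|}$ followed by the list $L_\omega\subseteq\calF_\Lambda(F_k)$ of all distinct tile-words that actually appear in $\omega$, each spelled out as a $|F_k|\log|\Lambda|$-bit string; (iii) for each $z\in Z_{n,k}$, listed in the canonical $\imath$-order, an index of length $\lceil\log|L_\omega|\rceil$ into $L_\omega$; (iv) direct $\log|\Lambda|$-bit encodings of the residual values $\omega(x)$ for $x\in F_n\setminus F_k Z_{n,k}$, listed in $\imath$-order. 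Using canonical computability of $(F_n)_n$, computability of $(\calZ_n)_n$ (via Proposition \ref{prop.monoteq}) and of the group operation, one builds a single decompressor $A'$ which parses the four blocks, enumerates $Z_{n,k}$ in $\imath$-order, reconstructs each bijection $F_k\to F_k z$, and fills in the tile and boundary values. The total bit-length is bounded by
\begin{equation*}
O(\log n)+N_k|F_k|\log|\Lambda|+|Z_{n,k}|(\log N_k+1)+|F_n\setminus F_k Z_{n,k}|\log|\Lambda|.
\end{equation*}
Dividing by $|F_n|$ and taking $\limsup_{n\to\infty}$: the first term vanishes by normality ($|F_n|/\log n\to\infty$); the second vanishes because $k$ is fixed; the third tends to at most $\log N_k/|F_k|+1/|F_k|<h(\bfX)+2\varepsilon$ by Proposition \ref{prop.fmonot}; and the fourth vanishes by the same proposition. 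Optimality of $\aast$ then yields $\kcs{\calF_\Lambda}\le h(\bfX)+2\varepsilon$, and arbitrariness of $\varepsilon$ concludes the argument.

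The main obstacle I anticipate is the rigorous verification that $A'$ is a total computable algorithm on the prescribed binary strings, parsing all four blocks deterministically from nothing but the computability hypotheses on $\Gamma$ and the monotiling (never using any a priori knowledge of $\prX$). Granted that, the remaining ingredients---the $2^{N+1}$ counting bound, the tile-fraction asymptotics from Proposition \ref{prop.fmonot}, and the limit manipulation in $k$ and $n$---are routine.
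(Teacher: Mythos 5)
Your proposal is correct and follows essentially the same route as the paper: the counting bound via the number of short programs for $h(\bfX)\le\kcs{\calF_\Lambda}$, and for the converse an explicit decompressor that transmits $k$, $n$, a dictionary of tile-words on $F_k$, indices for the interior centers $\calZ_k\cap\kintl{F_k}{F_n}$, and the residual letters, with Proposition \ref{prop.fmonot} and normality killing the error terms. The only (harmless) deviation is that you transmit just the tile-words actually occurring in $\omega$ rather than all of $\calF_\Lambda(F_k)$ as the paper does; both yield the same asymptotics.
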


The proof is split into two parts, establishing respective inequalities in Theorems \ref{thm.brudnogeq} and \ref{thm.brudnoleq}. Given a subshift $\bfX$ on the alphabet $\Lambda$, we define the cover
\begin{equation*}
\alpha_{\Lambda}:=\{A_1,\dots,A_k\}, \ A_i:=\{ \omega \in \prX: \omega(\ue) = i\} \text{ for } i = 1,\dots,k.
\end{equation*}
Then $\alpha_{\Lambda}$ is, clearly, a generating cover: for every finite open cover $\beta$ of $\prX$ there exists a finite subset $F \subseteq \Gamma$ such that the cover $\alpha_{\Lambda}^F$ is finer than $\beta$. We will use the following well-known
\begin{prop}
Let $\bfX$ be a subshift of $\Lambda^{\Gamma}$ and $\alpha_{\Lambda}$ be the cover defined above.  Then
\begin{equation*}
h(\alpha_{\Lambda},\Gamma) = \lim\limits_{n\to \infty} \frac{\log \card \calF_{\Lambda}(F_n)}{\cntm{F_n}}= h(\bfX).
\end{equation*}
\end{prop}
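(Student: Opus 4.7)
The strategy is to establish the two equalities separately. For the first, I would analyze $\alpha_\Lambda^F$ for an arbitrary finite $F \subseteq \Gamma$. From the action formula $(g\cdot \omega)(x) = \omega(xg)$ one gets $g^{-1} A_i = \{ \omega \in \prX : \omega(g) = i\}$, so the members of $\alpha_\Lambda^F = \bigvee_{g \in F} g^{-1}\alpha_\Lambda$ are exactly the cylinders $C_s := \{\omega \in \prX : \omega|_F = s\}$ parametrized by $s \in \Lambda^F$. The nonempty $C_s$ are precisely those with $s \in \calF_\Lambda(F)$, and they form a \emph{disjoint} cover of $\prX$. Hence every subcover of $\alpha_\Lambda^F$ must contain each nonempty $C_s$, forcing $H(\alpha_\Lambda^F) = \log \card \calF_\Lambda(F)$. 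Specializing to $F = F_n$ and dividing by $\cntm{F_n}$ yields the first equality.

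For the second equality, the inequality $h(\alpha_\Lambda, \Gamma) \leq h(\bfX)$ is immediate from the supremum definition of $h(\bfX)$. For the reverse, I would verify that $\alpha_\Lambda$ is a \emph{generating} cover: given a finite open cover $\beta$ of $\prX$, compactness of $\prX \subseteq \Lambda^{\Gamma}$ allows me to pick, for every $\omega \in \prX$, a basic cylinder neighborhood $U_\omega \subseteq B_\omega \in \beta$ depending only on finitely many coordinates $F_\omega$; extracting a finite subcover $U_{\omega_1}, \dots, U_{\omega_m}$ and setting $F := F_{\omega_1} \cup \dots \cup F_{\omega_m}$, any nonempty $C_s \in \alpha_\Lambda^F$ meets (and hence lies inside) some $U_{\omega_i} \subseteq B_{\omega_i}$. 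Thus every element of $\alpha_\Lambda^F$ is contained in some element of $\beta$.

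To finish I would combine this with standard entropy manipulations. Unwinding $(\alpha_\Lambda^F)^{F_n} = \alpha_\Lambda^{F F_n}$ via $(g^{-1})(h^{-1}) = (hg)^{-1}$, together with subadditivity and the crude bound $H(\alpha_\Lambda^S) \leq \cntm{S} \log \cntm{\Lambda}$ (itself a consequence of the first equality applied to $S$), yields
\begin{equation*}
H(\beta^{F_n}) \leq H\bigl((\alpha_\Lambda^F)^{F_n}\bigr) = H(\alpha_\Lambda^{F F_n}) \leq H(\alpha_\Lambda^{F_n}) + \cntm{F F_n \setminus F_n} \log \cntm{\Lambda}.
\end{equation*}
Dividing by $\cntm{F_n}$ and invoking $\cntm{F F_n \setminus F_n}/\cntm{F_n} \to 0$, which follows from the F{\o}lner condition applied to $K := F \cup \{\ue\}$, gives $h(\beta,\Gamma) \leq h(\alpha_\Lambda,\Gamma)$, and the supremum over $\beta$ finishes the argument. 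The only real care point is the side convention ensuring the translate $F F_n$ (not $F_n F$) appears, which is the form controlled by the left F{\o}lner property in use throughout.
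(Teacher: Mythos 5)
Your proof is correct. The paper does not actually prove this proposition---it states it as ``well-known'' and omits the argument entirely---so there is nothing to compare against beyond the surrounding text, which asserts exactly the generating property of $\alpha_{\Lambda}$ that you establish via compactness. Your two steps (identifying the elements of $\alpha_{\Lambda}^{F_n}$ with the disjoint nonempty cylinders indexed by $\calF_{\Lambda}(F_n)$, and then passing from an arbitrary cover $\beta$ to $\alpha_{\Lambda}^{F}$ and absorbing the error via subadditivity and the F{\o}lner condition for $K=F\cup\{\ue\}$) constitute the standard and complete argument, including the correct attention to the side convention producing $FF_n$ rather than $F_nF$.
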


We will now establish the first inequality. The proof is essentially the same as the original one from \cite{brudno1974}.
\begin{thm}
\label{thm.brudnoleq}
In the setting of Theorem \ref{thm.brudno} we have
\begin{equation}
h(\bfX) \leq \kcs{\calF_{\Lambda}}
\end{equation}
\end{thm}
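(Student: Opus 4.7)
The plan is to use a simple counting argument, essentially identical to Brudno's original one. By the proposition stated just before the theorem, $h(\bfX)$ is the limit of $\frac{\log \card \calF_{\Lambda}(F_n)}{\cntm{F_n}}$, so it suffices to bound $\card \calF_{\Lambda}(F_n)$ in terms of the maximal Kolmogorov complexity of its elements.

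First I would define $M_n := \max_{\omega \in \calF_{\Lambda}(F_n)} \uK(\omega, F_n)$, so that by the definition of asymptotic complexity,
\begin{equation*}
\kcs{\calF_{\Lambda}} = \limsup_{n \to \infty} \frac{M_n}{\cntm{F_n}}.
\end{equation*}
Next, since the optimal decompressor $\aast$ is a partial function from binary words to $\Lambda$-words, each finite word in $\calF_{\Lambda}(F_n)$ (viewed via $\imath$ as a word on $\imath(F_n)$, hence on $\{1,\dots,\cntm{F_n}\}$) is the $\aast$-image of at most one binary description of minimal length. As there are at most $2^{M_n+1}-1$ binary strings of length at most $M_n$, one obtains
\begin{equation*}
\card \calF_{\Lambda}(F_n) \;\leq\; 2^{M_n+1} - 1 \;<\; 2^{M_n+1}.
\end{equation*}

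Taking logarithms and dividing by $\cntm{F_n}$ gives
\begin{equation*}
\frac{\log \card \calF_{\Lambda}(F_n)}{\cntm{F_n}} \;\leq\; \frac{M_n + 1}{\cntm{F_n}}.
\end{equation*}
Since $\cntm{F_n} \to \infty$ (as $(F_n)_{n\geq 1}$ is a F{\o}lner sequence in the infinite group $\Gamma$), the $+1$ is negligible in the limit. Passing to $\limsup$ on both sides and invoking the proposition to identify the left-hand side limit with $h(\bfX)$ yields
\begin{equation*}
h(\bfX) \;=\; \lim_{n\to\infty} \frac{\log \card \calF_{\Lambda}(F_n)}{\cntm{F_n}} \;\leq\; \limsup_{n \to \infty} \frac{M_n}{\cntm{F_n}} \;=\; \kcs{\calF_{\Lambda}},
\end{equation*}
which is the desired inequality. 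There is no real obstacle here: the computability hypotheses and the monotiling structure are not needed for this direction, only the fact that $(F_n)_{n\geq 1}$ is a F{\o}lner sequence so that the entropy has the stated combinatorial description. The interesting direction, which will require the monotiling to cover $F_n$ by translates of a smaller tile $F_k$ and thus allow one to \emph{encode} each word $\omega \in \calF_{\Lambda}(F_n)$ by listing the (few) distinct $F_k$-patterns together with the positions in which each pattern occurs, is Theorem \ref{thm.brudnogeq}.
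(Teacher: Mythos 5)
Your argument is correct and is essentially the paper's own proof: both rest on the observation that a bound on the maximal complexity over $\calF_{\Lambda}(F_n)$ caps the number of distinct words by counting the available binary descriptions, after which one takes logarithms and passes to the limit. The only cosmetic difference is that you work directly with $M_n = \max_{\omega} \uK(\omega,F_n)$ while the paper introduces an auxiliary threshold $t > \kcs{\calF_{\Lambda}}$; the content is identical.
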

\begin{proof}
By the definition, we have to show that
\begin{equation*}
\lim\limits_{n\to \infty} \frac{\log \card \calF_{\Lambda}(F_n)}{\cntm{F_n}} \leq \limsup\limits_{n \to \infty} \max\limits_{\omega \in \calF_{\Lambda}(F_n)} \kca{\omega}{F_n}.
\end{equation*}
Suppose that $\kcs{\calF_{\Lambda}} < t$ for some $t \geq 0$. Then there exists $n_0$ such that for all $n \geq n_0$ and all $\omega \in \calF_{\Lambda}(F_n)$ the inequality $\kc{\omega}{F_n} \leq t \cntm{F_n}$ holds. There are at most $2^{t \cntm{F_n} + 1}$ valid binary programs for the decompressor $\aast$ of length at most $t \cntm{F_n}$, hence $\card \calF_{\Lambda}(F_n) \leq 2^{t \cntm{F_n}+1}$. Taking the logarithm shows that for all $n \geq n_0$ we have
\begin{equation*}
\frac{\log \card \calF_{\Lambda}(F_n)}{\cntm{F_n}} \leq t+ \frac{1}{|F_n|},
\end{equation*}
and this completes the proof.
\end{proof}

The proof of the second inequality requires more work. The proof we provide is based on the idea of the proof of Lemma 5.1 from \cite{simpson2015}.
\begin{thm}
\label{thm.brudnogeq}
In the setting of Theorem \ref{thm.brudno} we have
\begin{equation}
h(\bfX) \geq \kcs{\calF_{\Lambda}}
\end{equation}
\end{thm}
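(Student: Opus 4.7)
My plan, following the classical Brudno strategy, is to produce for each $\omega \in \calF_{\Lambda}(F_n)$ an explicit short binary description that the optimal decompressor $\aast$ can unpack using only the computable data of the monotiling. Fix $\epsilon > 0$. By the proposition preceding Theorem \ref{thm.brudnoleq}, choose (and fix for the remainder) $k$ large enough that $\cntm{F_k} > 1/\epsilon$ and $\cntm{\calF_{\Lambda}(F_k)} < 2^{(h(\bfX)+\epsilon)\cntm{F_k}}$; such $k$ exists because $\cntm{F_k}\to\infty$ and $\log\cntm{\calF_{\Lambda}(F_k)}/\cntm{F_k}\to h(\bfX)$. Write $C_n := \calZ_k \cap \kintl{F_k}{F_n}$ and $T_n := F_k\cdot C_n$. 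Since $\{F_k z : z \in \calZ_k\}$ tiles $\Gamma$ disjointly and $z \in \kintl{F_k}{F_n}$ forces $F_k z \subseteq F_n$, the set $T_n$ is a disjoint union of $\cntm{C_n}$ translates of $F_k$ inside $F_n$. By Proposition \ref{prop.fmonot}, $\cntm{C_n}\cntm{F_k}/\cntm{F_n}\to 1$, so $\cntm{F_n\setminus T_n}/\cntm{F_n}\to 0$ as $n\to\infty$.

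The description $p$ of $\omega$ consists of four concatenated blocks parsed via the prefix-free code of Section \ref{ss.compcompl}. Block (a) is $\widehat{n}\,\widehat{k}$, of length $O(\log n)$. Block (b) encodes the finite list of all patterns in $\calF_{\Lambda}(F_k)$: the prefix-free encoding of $N := \cntm{\calF_{\Lambda}(F_k)}$ followed by $N$ explicit words of length $\cntm{F_k}\lceil\log\cntm{\Lambda}\rceil$ each; its length is a constant $c_k$ depending only on $k$. Block (c) writes, for each $z \in C_n$ in increasing $\imath$-order, the $\lceil\log N\rceil$-bit index within the list (b) of $(z\cdot\omega)|_{F_k}$; note that $(z\cdot\omega)|_{F_k}\in\calF_{\Lambda}(F_k)$ by $\Gamma$-invariance of $\prX$, and agrees with $\omega|_{F_k z}$ under the bijection $f\mapsto fz$. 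Block (d) lists $\omega(x)$ for each $x \in F_n\setminus T_n$ in increasing $\imath$-order, using $\lceil\log\cntm{\Lambda}\rceil$ bits per letter.

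A decompressor $A'$ recovers $\omega|_{F_n}$ as follows: parse $n$ and $k$ from (a); parse $N$ and the dictionary from (b); using canonical computability of $(F_n)_{n\ge 1}$ together with computability of $(\calZ_n)_{n\ge 1}$, compute $\kintl{F_k}{F_n}=F_n\setminus\kbdrl{F_k}{F_n}$, then $C_n$ and $T_n$, and enumerate $C_n$ and $F_n\setminus T_n$ in increasing $\imath$-order; finally use (c),(d) with the translation identification to fill in $\omega|_{T_n}$ and $\omega|_{F_n\setminus T_n}$. Summing lengths,
$$\ell(p)\le (h(\bfX)+\epsilon)\cntm{F_k}\cntm{C_n} + \cntm{C_n} + \cntm{F_n\setminus T_n}\lceil\log\cntm{\Lambda}\rceil + O(\log n) + c_k.$$
Dividing by $\cntm{F_n}$ and invoking the normality assumption $\cntm{F_n}/\log n\to\infty$ to absorb the $O(\log n)$ term, the boundary estimate to kill block (d), and $\cntm{C_n}/\cntm{F_n}\le 1/\cntm{F_k}<\epsilon$, yields $\overline{\uK}_{A'}(\omega,F_n)\le h(\bfX)+2\epsilon + o_n(1)$ uniformly in $\omega\in\calF_{\Lambda}(F_n)$. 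Optimality of $\aast$ transfers this to $\kca{\omega}{F_n}$ up to an additive constant absorbed in $o_n(1)$, and taking $\limsup_{n\to\infty}$ followed by $\epsilon\to 0$ gives $\kcs{\calF_{\Lambda}}\le h(\bfX)$.

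The step I expect to be most delicate is verifying that the decompressor can, from $n$ and $k$ alone, canonically enumerate $C_n$ and $F_n\setminus T_n$ in a fixed order so that blocks (c) and (d) are unambiguously parsed. This is precisely where the computability assumptions on the monotiling enter: canonical computability of $(F_n)_{n \ge 1}$ and $(F_k)_{k \ge 1}$, computability of $(\calZ_n)_{n \ge 1}$, closure of (canonically) computable sequences under union, intersection, and difference, and computability of the multiplication map $F_k\times C_n \to T_n$. Everything else is bookkeeping, aided by normality which both guarantees $\ue\in F_k$ (used in Proposition \ref{prop.fmonot}) and ensures that the $O(\log n)$ overhead of block (a) is negligible relative to $\cntm{F_n}$.
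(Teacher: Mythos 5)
Your proposal is correct and follows essentially the same route as the paper's own proof: you build the same decompressor that tiles $\kintl{F_k}{F_n}$ by the translates $F_k z$, $z\in\calZ_k\cap\kintl{F_k}{F_n}$, encodes each block by its index in the dictionary $\calF_{\Lambda}(F_k)$, spells out the boundary remainder letter by letter, and controls the lengths via Proposition \ref{prop.fmonot}, the computability of the monotiling, normality, and optimality of $\aast$. The only differences are cosmetic (fixed-width $\lceil\log N\rceil$-bit indices instead of prefix-free ones, and fixing $k$ with $\log\card\calF_{\Lambda}(F_k)<(h(\bfX)+\epsilon)\cntm{F_k}$ up front rather than letting $k\to\infty$ at the end).
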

\begin{proof}
By the definition, we have to show that
\begin{equation*}
\limsup\limits_{n \to \infty} \max\limits_{\omega \in \calF_{\Lambda}(F_n)} \kca{\omega}{F_n} \leq \lim\limits_{n\to \infty} \frac{\log \card \calF_{\Lambda}(F_n)}{\cntm{F_n}}.
\end{equation*}

The alphabet $\Lambda$ is finite, so we encode each letter of $\Lambda$ using precisely $\lfloor \log \card \Lambda \rfloor+1$ bits. We fix this encoding. Then binary words of length $N\left(\lfloor \log \card \Lambda \rfloor+1 \right)$ are unambiguously interpreted as $\Lambda$-words of length $N$. We will now describe a decompressor $\aex$ that will be used to prove the theorem. The decompressor is defined on the domain of binary programs of the form
\begin{equation}
\label{eq.prog}
\up = \widehat \uk \widehat \un \widehat \uN \widehat \uL  \widehat \ul \uw_1 \uw_2 \dots \uw_N \uv \widehat{\ui}_1 \widehat{\ui}_2 \dots \widehat{\ui}_s.
\end{equation}
Here $\widehat \uk, \widehat \un, \widehat \uN, \widehat \uL, \widehat \ul$ are simple prefix-free encodings of the natural numbers $k,n,N,L,l$. Binary words $\uw_1, \uw_2, \dots, \uw_N$ have all length $L$. Words $\widehat{\ui}_1, \widehat{\ui}_2, \dots, \widehat{\ui}_s$ encode some natural numbers $i_1,i_2,\dots,i_s$ that are required to be less or equal to $N$. Finally, $\uv$ is a binary word of length $l$. Observe that programs of the form \ref{eq.prog} are indeed unambiguously interpreted.

The decompressor $\aex$ works as follows. First, given $k,n$ above, the finite sets $$F_k, F_n, \kintl{F_k}{F_n}, \kintl{F_k}{F_n} \cap \calZ_k \subseteq \N$$ are computed. We let $I_{k,n}:=\kintl{F_k}{F_n} \cap \calZ_k$ and compute the set $$\Delta_{k,n}:=F_k \left( \kintl{F_k}{F_n} \cap \calZ_k \right) \subseteq F_n.$$ We treat $N$ binary words $\uw_1, \uw_2, \dots, \uw_N$ of length $L$ as encodings of $\Lambda$-words $w_1,w_2,\dots,w_N$ of length $F_k$, if $L \neq \cntm{F_k} \left( \lfloor \log \card \Lambda \rfloor+1 \right)$ the algorithm terminates without producing output. Words $w_1,w_2,\dots,w_N$ form the \emph{dictionary} that we will use to encode parts of the words. We require that $s = \cntm{\kintl{F_k}{F_n} \cap \calZ_k}$ and $$l = \cntm{F_n \setminus \Delta_{k,n}} \left( \lfloor \log \card \Lambda \rfloor+1 \right),$$ the algorithm terminates without producing output if this does not hold. Otherwise, the binary word $\uv$ of length $l$ is seen as a binary encoding of the $\Lambda$-word $v$ of length $\cntm{F_n \setminus \Delta_{k,n}}$.

We will now compute a $\Lambda$-word $\omega$ defined on $F_n$. The set $\kintl{F_k}{F_n} \cap \calZ_k$ is ordered as a subset of $\N$. For $j$-th element $g_j \in \kintl{F_k}{F_n} \cap \calZ_k$ we require that $\omega|_{F_k g_j} \circ \imath_{F_k g_j}^{-1} = w_{i_j}$, where $j=1,2,\dots,s$. That is, we require that the restriction of $\omega$ to the subset $F_k g_j$ coincides with $i_j$-th element of the dictionary for every $j$. It is clear that this determines the restriction $\omega|_{\Delta_{k,n}}$, and it remains to describe $\omega|_{F_n \setminus \Delta_{k,n}}$. We require that $\omega|_{F_n \setminus \Delta_{k,n}} \circ \imath_{F_n \setminus \Delta_{k,n}}^{-1} = v$. The decompressor $\aex$ prints the $\Lambda$-word $\omega \circ \imath_{F_n}^{-1}$.

Fix $k \geq 1$ and $\varepsilon>0$. Let $n_0$ be such that for all $n \geq n_0$ we have
\begin{equation*}
\frac{\cntm{F_n \setminus \Delta_{k,n}}}{\cntm{F_n}} \leq \varepsilon.
\end{equation*}
Let $\omega \in \calF_{\Lambda}(F_n)$. We use the following program to encode $\omega$. We let $N := \card \calF_{\Lambda}(F_k)$, $L:=\cntm{F_k} \left( \lfloor \log \card \Lambda \rfloor+1 \right)$ and $w_1,w_2,\dots,w_N$ be the list of words $\upsilon \circ \imath_{F_k}^{-1}$ for $\upsilon \in \calF_{\Lambda}(F_k)$ (say, in lexicographic order). For every $g_j \in \kintl{F_k}{F_n} \cap \calZ_k$ and every $x \in F_k$ note that
\begin{equation*}
\omega|_{F_k g_j} (x g_j) = (g_j \cdot \omega)|_{F_k}(x),
\end{equation*}
where $g_j \cdot \omega \in \prX$ by invariance. Hence we let $i_j$ be the index of the word $(g_j \cdot \omega)|_{F_k} \circ \imath_{F_k}^{-1}$ in the dictionary $w_1,w_2,\dots,w_N$ for every $j=1,2,\dots, \cntm{\kintl{F_k}{F_n} \cap \calZ_k}$. Finally, we let $v$ be the remainder $\omega|_{F_n \setminus \Delta_{k,n}} \circ \imath_{F_n \setminus \Delta_{k,n}}^{-1}$ and $l$ be the length of the binary encoding of the word $v$. It is clear that the program \ref{eq.prog} with the parameters determined above does describe $\omega|_{F_n}$.

It remains to estimate the length of $\up$. It is easy to see that
\begin{align*}
l(\up) &\leq l(\widehat \uk) + l(\widehat \un)+l(\widehat{\uN})+ l(\widehat{\uL})+l(\widehat{\ul})+\card{\calF_{\Lambda}(F_k)} \cntm{F_k} \left( \lfloor \log \card \Lambda \rfloor+1 \right)+\\
&+\cntm{F_n \setminus \Delta_{k,n}}\left( \lfloor \log \card \Lambda \rfloor+1 \right)+\cntm{I_{k,n}}l(\widehat{\uN}),
\end{align*}
and taking the limit as $n \to \infty$ we see (using Proposition \ref{prop.fmonot}) that
\begin{align*}
\limsup\limits_{n \to \infty} & \max\limits_{\omega \in \calF_{\Lambda}(F_n)} \kca{\omega}{F_n} \leq \varepsilon \left( \lfloor \log \card \Lambda \rfloor+1 \right)+ \\
&+\frac{2 \lfloor \log(\lfloor \log \card \calF_{\Lambda}(F_k) \rfloor + 1) \rfloor + \lfloor \log \card \calF_{\Lambda}(F_k) \rfloor + 5}{\cntm{F_k}}.
\end{align*}
Since $k, \varepsilon$ are arbitrary the conclusion follows.

\end{proof}

It is clear that the proof of Theorem \ref{thm.brudno} now follows from Theorem \ref{thm.brudnogeq} and Theorem \ref{thm.brudnoleq}.

\bibliographystyle{acm}
\bibliography{zdbrudnobib}

\def\cprime{$'$}
\begin{thebibliography}{1}

\bibitem{brudno1974}
{\sc Brudno, A.~A.}
\newblock Topological entropy, and complexity in the sense of {A}. {N}.
  {K}olmogorov.
\newblock {\em Uspehi Mat. Nauk 29}, 6(180) (1974), 157--158.

\bibitem{ceccherini2010}
{\sc Ceccherini-Silberstein, T., and Coornaert, M.}
\newblock {\em Cellular automata and groups}.
\newblock Springer Monographs in Mathematics. Springer-Verlag, Berlin, 2010.

\bibitem{golodets2002}
{\sc Golodets, V.~Y., and Sinel{\cprime}shchikov, S.~D.}
\newblock On the entropy theory of finitely-generated nilpotent group actions.
\newblock {\em Ergodic Theory Dynam. Systems 22}, 6 (2002), 1747--1771.

\bibitem{gromov1999}
{\sc Gromov, M.}
\newblock Topological invariants of dynamical systems and spaces of holomorphic
  maps. {I}.
\newblock {\em Math. Phys. Anal. Geom. 2}, 4 (1999), 323--415.

\bibitem{hedman2004}
{\sc Hedman, S.}
\newblock {\em A first course in logic}, vol.~1 of {\em Oxford Texts in Logic}.
\newblock Oxford University Press, Oxford, 2004.
\newblock An introduction to model theory, proof theory, computability, and
  complexity.

\bibitem{krieger2007}
{\sc Krieger, F.}
\newblock Le lemme d'{O}rnstein-{W}eiss d'apr\`es {G}romov.
\newblock In {\em Dynamics, ergodic theory, and geometry}, vol.~54 of {\em
  Math. Sci. Res. Inst. Publ.} Cambridge Univ. Press, Cambridge, 2007,
  pp.~99--111.

\bibitem{rabin1960}
{\sc Rabin, M.~O.}
\newblock Computable algebra, general theory and theory of computable fields.
\newblock {\em Trans. Amer. Math. Soc. 95\/} (1960), 341--360.

\bibitem{simpson2015}
{\sc Simpson, S.~G.}
\newblock Symbolic dynamics: entropy = dimension = complexity.
\newblock {\em Theory Comput. Syst. 56}, 3 (2015), 527--543.

\bibitem{weiss2001}
{\sc Turaev, V., and Vershik, A.}, Eds.
\newblock {\em Topology, ergodic theory, real algebraic geometry}, vol.~202 of
  {\em American Mathematical Society Translations, Series 2}.
\newblock American Mathematical Society, Providence, RI, 2001.
\newblock Rokhlin's memorial, Advances in the Mathematical Sciences, 50.

\end{thebibliography}
\end{document}